\def\@rmrk#1#2{\refstepcounter
    {#1}\@ifnextchar[{\@yrmrk{#1}{#2}}{\@xrmrk{#1}{#2}}}
\makeatletter\@addtoreset{equation}{section}\makeatother
 \newfont{\bfit}{cmbxti10 scaled 1200}
 \newcommand{\eps}{\varepsilon}
 \newcommand{\R}{\mathbb{R}}
 \newcommand{\N}{\mathbb{N}}
 \newcommand{\Z}{\mathbb{Z}}
 \newcommand{\prob}{\mathbb{P}}
 \newcommand{\Q}{\mathbb{Q}}
 \newcommand{\1}{{\sf 1}}
 \newcommand{\skris}{{\mathcal S}}
 \newcommand{\heap}[2]{\genfrac{}{}{0pt}{}{#1}{#2}}
 \newcommand{\sfrac}[2]{\mbox{$\frac{#1}{#2}$}}
 \newcommand{\ssup}[1] {{\scriptscriptstyle{({#1}})}}
\renewcommand{\subsection}{\secdef \subsct\sbsect}
\newcommand{\subsct}[2][default]{\refstepcounter{subsection}
\vspace{0.15cm}
{\flushleft\bf \arabic{section}.\arabic{subsection}~\bf #1  }
\nopagebreak\nopagebreak}
\newcommand{\sbsect}[1]{\vspace{0.1cm}\noindent
{\bf #1}\vspace{0.1cm}}
\newenvironment{example}{\refstepcounter{theorem}
{\bf Example \thetheorem\ }\nopagebreak  }%
{\nopagebreak {\hfill\rule{2mm}{2mm}}
\\ }
\newtheorem{theorem}{Theorem}[section]
\newtheorem{thm}{Theorem}
\newtheorem{lemma}[theorem]{Lemma}
\newtheorem{prop}[theorem]{Proposition}
\newcommand{\p}{\mathbb{P}}
\newcommand{\E}{\mathbb{E}}
\newcommand{\s}{\mathcal{S}}
\newtheoremstyle{thm}{1.5ex}{1.5ex}{\itshape\rmfamily}{}
{\bfseries\rmfamily}{}{2ex}{}
\newtheoremstyle{rem}{1.3ex}{1.3ex}{\rmfamily}{}
{\itshape\rmfamily}{}{1.5ex}{}
\theoremstyle{rem}
\def\thebibliography#1{\section*{References}
  \list%
  {\arabic{enumi}.}
    {\settowidth\labelwidth{[#1]}\leftmargin\labelwidth
    \advance\leftmargin\labelsep
    \parsep0pt\itemsep0pt
    \usecounter{enumi}}
    \def\newblock{\hskip .11em plus .33em minus .07em}
    \sloppy                   
    \sfcode`\.=1000\relax}
\begin{document}

\title[Skorokhod embeddings for two-sided Markov chains]
{Skorokhod embeddings for two-sided Markov chains}

\centerline{\LARGE \bf Skorokhod embeddings for two-sided Markov chains}

\vspace{0.4cm}

\thispagestyle{empty}
\vspace{0.3cm}
\textsc{Peter M\"orters\footnote{Communicating author.}} and \textsc{Istv\'an Redl}\\
Department of Mathematical Sciences,
University of Bath, Bath BA2 7AY, England\\
E--mail: \texttt{maspm@bath.ac.uk} and \texttt{ir250@bath.ac.uk}

\vspace{0.3cm}


\begin{quote}{\small {\bf Abstract: } Let \smash{$(X_n \colon n\in\Z)$} be a two-sided recurrent Markov chain
with fixed initial state~$X_0$ and let $\nu$ be a probability measure on its state space. We give a
necessary and sufficient criterion for the existence of a non-randomized time~$T$ such that
\smash{$(X_{T+n} \colon n\in\Z)$} has the law of the same Markov chain with initial distribution~$\nu$.
In the case when our criterion is satisfied we give an explicit solution, which is also a stopping time, and
study its moment properties. We show that this solution minimizes the expectation of $\psi(T)$  in the class
of all non-negative solutions, simultaneously for  all non-negative concave functions $\psi$.
}
\end{quote}
\vspace{0.4cm}

{\footnotesize
{\bf MSc classification (2000):} Primary 60J10 Secondary  60G40, 05C70.\\[-5mm]

{\bf Keywords:} Skorokhod embedding, stopping time, Markov chain, random walk, extra head scheme, unbiased shift,
random allocation, stable matching, optimal transport.}



\section{Introduction and statement of main results}\label{S:Intro}

Let $\mathcal{S}$ be a finite or countable state space and $p=(p_{ij} \colon i,j\in\s)$ an irreducible and recurrent transition matrix. Then there exists a stationary measure~$(m_i \colon i\in \s)$ with positive weights, which is finite in the positive
recurrent case, and infinite otherwise. The two-sided stationary Markov chain $X=(X_{n} \colon n \in \Z)$ with initial measure $(m_i \colon i \in\s)$ and transition
matrix~$p$ is characterized by
\begin{itemize}
\item $\p(X_n=i)=m_i$ for all $n\in\Z$, $i\in\s$;\\[-3mm]
\item $\prob(X_n=j \, | X_{n-1}, X_{n-2},\ldots)=p_{X_{n-1}j}$ for all $n\in\Z$, $i,j\in\s$.
\end{itemize}
This chain always exists, if we allow $\p$ to be a $\sigma$-finite measure. For the simplest construction, let
$(X_{n} \colon n \ge 0)$ be the chain with initial measure $(m_i \colon i \in\s)$ and transition matrix~$p$, and  $(X_{-n} \colon n \ge 0)$ be the chain
with given initial state $X_0$ and dual transition probabilities given by $p^*_{ij}=(m_j/m_i)p_{ji}$.%
\medskip%

By conditioning the stationary chain~$X$ on the event $\{X_0=i\}$, we define the two-sided Markov chain  with transition
matrix~$p$ with fixed initial state~$X_0=i$. Its law, denoted by~$\p_i$, does not depend on the choice of~$(m_i \colon i\in \s)$ and
is  always a probability law. Note that we can equivalently define this chain, or indeed the two-sided Markov chain  with transition
matrix~$p$ and arbitrary  initial distribution $\nu$, by picking $X_0$ according to $\nu$ and letting the forward and backward chains
$(X_{n} \colon n \ge 0)$, resp.~$(X_{-n} \colon n \ge 0)$, evolve as in the case of the stationary chain.
\medskip

A natural version of the \emph{Skorokhod embedding problem} in this context asks, given
the two-sided Markov chain $(X_{n} \colon n \in \Z)$ with transition matrix~$p$ and initial state~$X_0=i$
and a  probability measure $\nu$ on the state space~$\s$, whether there exists a random time~$T$ such that $(X_{n+T} \colon n \in \Z)$ is a two-sided Markov chain with transition matrix $p$ such that $X_{T}$ has law $\nu$. If this is the case we say that $T$ is an \emph{embedding} of  the \emph{target distribution}~$\nu$.
Our interest here is mainly in times $T$ which are \emph{non-randomized}, which means that $T$~is a measurable function of the sample chain~$X$. The random times~$T$ are often stopping times, but this is not a necessary requirement.
\medskip

\pagebreak[3]

Finding embeddings of two-sided Markov chains is a subtle problem, because even for stopping
times~$T$ the shifted process  \smash{$T^{-1}X:=(X_{n+T} \colon n \in \Z)$} often will \emph{not} be a
two-sided Markov chain.  For example, take a simple symmetric random walk on the integers, started in $X_{0}=0$,
and let $T$ be the first positive hitting time of the integer~$a>0$. Then~$T$ embeds the Dirac measure~$\delta_a$, but the increment $T^{-1}X_{0}-T^{-1}X_{-1}$ always takes  the value $+1$, hence  $T^{-1}X$  is not a two-sided simple random walk.
A similar argument shows that even shifting the simple random walk by a nonzero fixed time does not preserve the
property of being a simple random walk with given distribution of the state at time zero.%
\medskip%

The first main result of this paper gives a necessary and sufficient condition on the initial state, the target measure and the stationary distribution
for the existence of a Skorokhod embedding for an arbitrary two-sided Markov chain.
\smallskip

\begin{thm}\label{T:characterization_thm_intro}
Let $X$ be a two-sided irreducible and recurrent Markov chain with transition matrix~$p$ and initial state~$X_{0}=i$. Take $\nu=(\nu_j \colon j\in\s)$ to be
any  probability measure on $\mathcal{S}$. Then the following statements are equivalent.
\begin{itemize}
\item[$(a)$] There exist a non-randomized random time $T$ such that $(X_{n+T} \colon n \in \Z)$ is a Markov chain with transition
matrix $p$ and $X_{T}$ has law $\nu$.  \\[-2mm]
\item[$(b)$] The stationary measure $(m_{j} \colon j \in \mathcal{S})$ satisfies
$\frac{m_{i}}{m_{j}}\,\nu_j \in \Z$ 
for all $j \in \s$.
\end{itemize}
If the random time~$T$ in $(a)$ exists it can always be taken to be a stopping time.
\end{thm}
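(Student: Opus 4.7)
I would work in the framework of the $\sigma$-finite stationary measure $\mu$ on $\s^{\Z}$ with $\mu(X_0=j)=m_j$, in which $\p_j$ is exactly the Palm distribution at a visit to~$j$. A non-randomized embedding $T$, viewed as a measurable function of the chain, defines a shift-equivariant allocation sending each site $n$ with $X_n=i$ to the target $n+T(\theta^n\omega)\in\Z$, producing a (generally multi-) point process $\Lambda^{\mathrm{sel}}$ of intensity $m_i$. The embedding condition amounts to asking that the Palm distribution of $\Lambda^{\mathrm{sel}}$, conditioned on $X_0=j$, coincide with $\p_j$ for every $j$ with $\nu_j>0$.

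For necessity $(a)\Rightarrow(b)$, let $M(0)$ denote the multiplicity of $\Lambda^{\mathrm{sel}}$ at the origin. A Campbell--Mecke computation, comparing the intensity $m_i\nu_j$ of $\Lambda^{\mathrm{sel}}$ at state~$j$ with the intensity $m_j$ of visits to~$j$, yields
\[
\E_j\!\left[M(0)\,\1_A\right]\;=\;\frac{m_i\nu_j}{m_j}\,\p_j(A)\quad\text{for every measurable }A.
\]
Equivalently, $M(0)$ is $\p_j$-almost surely equal to the deterministic constant $m_i\nu_j/m_j$. Since $M(0)$ takes values in $\Z_{\ge 0}$, this constant lies in $\Z_{\ge 0}$, which is condition~(b).

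For sufficiency $(b)\Rightarrow(a)$, set $k_j:=(m_i/m_j)\nu_j\in\Z_{\ge0}$; the relation $\sum_j k_jm_j=m_i$ is forced by $\sum_j\nu_j=1$. Build a weighted sink process $\Gamma$ by placing $k_j$ tokens at each visit to~$j$; its intensity equals that of $\Lambda_i$. A shift-equivariant matching between $\Lambda_i$ and $\Gamma$ (for instance, a stable allocation of Hoffman--Holroyd--Peres type) gives, under $\p_i$, a time $T$ matched to the source at~$0$. Palm inversion then shows that $(X_{T+n})_{n\in\Z}$ has law $\p_\nu$.

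To upgrade to a stopping time, I would work with the i.i.d.\ excursions $E_1,E_2,\dots$ from $i$ of the forward chain and assign to $E_m$ the weight $W_m:=\sum_j k_jN_j^{(m)}$, where $N_j^{(m)}$ counts visits to~$j$ in $E_m$. Condition~(b) gives $\E_i[W_m]=\sum_j k_j(m_j/m_i)=1$, so a one-sided analogue of Liggett's extra-head scheme applied to the centred i.i.d.\ sequence $(W_m-1)$ singles out a first excursion $E_M$ together with an intra-excursion offset defining a forward stopping time $T$. The main obstacle will be verifying that this $T$ yields a genuinely \emph{two-sided} Markov shift: forward Markovity of $(X_{T+n})_{n\ge 0}$ is automatic from the strong Markov property, but backward Markovity requires that, conditionally on $X_T=j$, the reversed trajectory $(X_T,X_{T-1},\dots,X_0=i)$ be distributed as the dual $p^*$-chain started at~$j$ run until its first visit to $i$, after which the pre-$0$ backward chain continues as the $p^*$-chain from~$i$. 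This calls for the reversal identity $p^*_{ab}=(m_b/m_a)p_{ba}$ together with the integer mass balance from~(b) to produce the correct Radon--Nikodym factor $m_i/m_j$ between $p$-paths from $i$ to $j$ and $p^*$-paths from $j$ to $i$.
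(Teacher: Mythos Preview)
Your necessity argument $(a)\Rightarrow(b)$ is correct and is essentially the paper's argument recast in Palm language: the paper phrases it as the associated allocation rule balancing the local-time measures $L^i$ and $L^\nu$, and since $L^i$ has atoms of size $1/m_i$ while $L^\nu$ has atoms of size $\nu_j/m_j$, the latter must be integer multiples of the former. Your multiplicity $M(0)$ is exactly the number of $i$-sites allocated to a given $j$-site.

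For sufficiency, your Liggett-type construction on excursion weights is the paper's explicit stopping time
\[
T_*=\min\Big\{n\ge 0:\ L^i([0,n])\le \sum_{j}\nu_j L^j([0,n])\Big\},
\]
so the construction itself is right. The gap is in your proposed verification. Your formulation of what must be checked is not correct: you write that, conditionally on $X_T=j$, the reversed segment $(X_T,\dots,X_0=i)$ should be the dual chain from $j$ \emph{run until its first visit to~$i$}. But for $T_*$ the forward chain may return to $i$ several times before time~$T_*$, so the backward path from $T_*$ typically hits $i$ well before reaching time~$0$; there is no reason the origin should be the first $i$-visit of the reversed path. Even once this is repaired, a direct path-by-path verification is awkward because the index $M$ of the selected excursion depends on all the earlier excursions through the weights $W_1,\dots,W_M$, so one must show that this selection leaves their joint law (under reversal) unchanged --- precisely the delicate point.

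The paper avoids this entirely by proving a general equivalence (its Proposition~2.1, via a mass-transport computation): a translation-invariant transport rule associated to $T$ balances $L^i$ and $L^\nu$ if and only if $T^{-1}X$ has law $\p_\nu$ as a \emph{two-sided} chain. This reduces the whole two-sided Markov verification to a single pathwise balancing identity for the allocation rule $\tau_*$ associated to $T_*$, which is then checked by an elementary combinatorial lemma on excursion intervals together with a recurrence argument ensuring every ball is matched. That reduction is the missing idea in your plan; once you have it, no separate ``backward Markovity'' analysis is needed.
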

\smallskip

\begin{example}(Embedding measures with mass in the initial state)\label{Ex:characterization_thm_intro}
Assume that the target measure~$\nu$ charges the initial state~$i \in \s$ of the Markov chain, i.e.\ $\nu_i>0$.
Choosing $i=j$ in $(b)$ shows that a non-randomized random time~$T$ with the properties of $(a)$ can exist only if
$\nu = \delta_{i}$. In this case a natural family of embeddings can be constructed 
using the concept of point stationarity, see for example~\cite{Th00},
as follows: Let $r\in\N$ and let $T_r$ be the the time of the $r$th
visit of state~$i$ after time zero. Then it is easy to check, and follows from~\cite[Theorem 6.3]{LT09},
that the process $T_r^{-1}X$  is a Markov chain with transition matrix~$p$ and $X_{T_r}=i$.
\end{example}\ \\[-6mm]

\begin{example}(Extra head problem)\label{Ex:extra_head}
Take a doubly-infinite sequence of tosses of a (possibly biased) coin, or more precisely
let $X=(X_n \colon n\in\Z)$ be i.i.d.\ random variables with distribution
$\p(X_{n}={\texttt{head}})=p$, $\p(X_{n}={\texttt{tail}})=1-p$, for some $p\in(0,1)$.
Our aim is to find, without using any randomness generated in a way different from looking
at coins in the sequence, a coin showing head in this sequence in such a way that the two semi-infinite sequences of coins
to the left and to the right of this coin  remain independent i.i.d.\ sequences of coins with the same bias. This is known
as \emph{extra head problem} and was investigated and fully answered by Liggett~\cite{LT02}
and Holroyd and Peres~\cite{HP05}. To relate this to our setup, we can assume that $X_0={\texttt{tail}}$, as otherwise the coin at the origin is the extra head.
Then the extra head problem becomes the Skorokhod embedding problem for~$X$ with initial
state~$X_0= {\texttt{tail}}$ and target measure $\nu=\delta_{{\texttt{head}}}$. Theorem~\ref{T:characterization_thm_intro}
shows (as proved by Holroyd and Peres before) that the extra head problem has a solution if and only if
\smash{$(1-p)/p\in\Z$}, i.e.\ if and only if $p$ is the inverse of an integer. Moreover, Liggett~\cite{LT02} gives an explicit
solution of the extra head problem which we generalize to our setup in Theorem~2 below.
\end{example}\ \\[-6mm]

\begin{example}(Inverse extra head problem)\label{Ex:inv_extra_head}
If in the setup of Example~\ref{Ex:extra_head} the state of the coin at the origin has been revealed, we ask
whether it is possible to shift the sequence in such a way that this information is lost, i.e. the shifted sequence is
an i.i.d.\ sequence of coins with the original bias. This means that we wish to embed the invariant distribution~$\nu=m$ given
by \smash{$m_{\texttt{head}}=p, m_{\texttt{tail}}=1-p$}. Theorem~\ref{T:characterization_thm_intro}
shows that this is impossible.
\end{example}
\pagebreak[3]

\begin{example}(Extra head problem with a finite pattern)\label{Ex:extra_head_with_pattern}
In the setup of Example~\ref{Ex:extra_head} we now ask to find a particular \emph{finite pattern} of successive outcomes, such that the coins to its
left and right remain an i.i.d.\ sequence of coins with the same bias. Looking, for example, for the pattern  $\texttt{head}/\texttt{tail}$ we would first
reveal the coin at the origin, and then if this shows $\texttt{head}$ its right neighbour, and if this shows $\texttt{tail}$ its left neighbour. The 
underlying Markov chain has the state space $\{\texttt{tail}/\texttt{tail}, \texttt{tail}/\texttt{head}, \texttt{head}/\texttt{tail}, \texttt{head}/\texttt{head}\}$, 
the transition matrix
$$\left( \begin{array}{cccc} 1-p & p & 0 & 0\\ 0 & 0& 1-p & p\\ 1-p & p & 0& 0\\ 0& 0 & 1-p & p \\ \end{array} \right),$$
and invariant measure $((1-p)^2,\, p(1-p),\, p(1-p),\, p^2)$. Our theorem shows that, if we initially reveal $\texttt{tail}/\texttt{tail}$
then we need $1/p$ to be an integer, and if we reveal $\texttt{head}/\texttt{head}$ then we need $1/(1-p)$ to be an integer. 
Hence we can only embed $\texttt{head}/\texttt{head}$ if $p=\frac12$. More generally, the problem
can be solved for patterns that are repetitions of the single symbol $\texttt{head}$ if and only if $1/p$ is an integer,
for patterns that are repetitions of the single symbol $\texttt{tail}$ if and only if $1/(1-p)$ is an integer,
and for patterns containing both symbols $\texttt{tail}$ and $\texttt{head}$ if and only if $p=\frac12$.
\end{example}
\ \\[-7mm]

\begin{example}(Simple random walk)\label{Ex:SRW}
Let $X$ be a two-sided simple symmetric random walk on the integers, with $X_{0}=i$ for some $i \in \Z$. In this case
the invariant measure is $m_{i}=1$ for all $i \in \Z$, hence Theorem~\ref{T:characterization_thm_intro} shows that the target
measures that can be embedded are precisely the Dirac measures $\delta_{j}$, $j \in \s$. The same result holds for the simple
symmetric random walk on the square lattice~$\Z^2$.
\end{example}
\ \\[-7mm]

The proof of Theorem~\ref{T:characterization_thm_intro} extends the ideas developed by Liggett~\cite{LT02}
and Holroyd and Peres~\cite{HP05} for the extra head problem to the more general Markov chain setup. In particular, under the
additional assumption that the target measure does not charge the initial state, we are able to generalize Liggett's construction of
an elegant explicit solution, in analogy to the Brownian motion case studied in Last et al.~\cite{LMT14}. Recall that the  case when the target
measure charges the initial state was already discussed in
Example~\ref{Ex:characterization_thm_intro}. To describe this solution we define the local time~$L^j$ spent by $X$ at
state $j \in \mathcal{S}$  to be the normalized counting measure given by
$$L^{j}(A):= \frac1{m_j}\,\#\{ n\in A \colon X_{n}= j\} \qquad \mbox{ for any $A\subset \Z$.}$$

\begin{thm}\label{T:Existence_allocation_rule_intro}
Let $X$ be a two-sided irreducible and recurrent Markov chain with $X_{0}=i$ and further assume that the target measure $\nu$ satisfies $\nu_i=0$ and
the conditions in Theorem~\ref{T:characterization_thm_intro}$\,(b)$. Then
\begin{align}
T_*:&=\min\Big\{n\ge 0 \colon L^{i}([0,n]) \le \sum_{j \in \s} \nu_j\,L^{j}([0,n])\Big\} \label{D:tau}
\end{align}
is a finite, non-randomized stopping time satisfying the conditions of Theorem~\ref{T:characterization_thm_intro}$\,(a)$.
\end{thm}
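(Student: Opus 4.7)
The plan is to adapt the stable allocation / unbiased shift construction of Liggett~\cite{LT02} and Last--M\"orters--Thorisson~\cite{LMT14} to the Markov chain setting. The first step is to reformulate the defining inequality. Writing
$$S_n := \sum_{k=0}^n \Bigl(\tfrac{\nu_{X_k}}{m_{X_k}} - \tfrac{1}{m_i}\mathbf{1}\{X_k=i\}\Bigr),$$
the definition \eqref{D:tau} becomes $T_* = \min\{n\ge 0 : S_n \ge 0\}$. Since $\nu_i=0$ one has $S_0 = -1/m_i < 0$, so $T_*\ge 1$, and in particular $X_{T_*}\neq i$ because any visit to~$i$ only decreases~$S_n$.

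For \emph{finiteness} of $T_*$, I would apply the ratio ergodic theorem (Hopf/Chacon--Ornstein) to see that the positive and negative parts of $S_n$ have the same asymptotic growth rate: both behave like $N^i_n/m_i$ to leading order, since $\sum_j\nu_j=1$. Consequently $S_n$ has zero drift, and combining the recurrence of $X$ with an oscillation argument for the sequence $(S_n)$ sampled at successive returns to state~$i$ gives $T_* < \infty$ almost surely.

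The main step is to show that $(X_{T_*+n}:n\in\Z)$ has law $\int \nu_j\,\mathbb{P}_j(\cdot)\,dj$, i.e.\ it is a Markov chain with matrix~$p$ whose starting state has law $\nu$. For this I would pass to the two-sided stationary chain under the (possibly $\sigma$-finite) stationary measure $\mathbb{P}$ and use a Palm-calculus / mass-transport argument. Each visit to state~$i$ is viewed as a unit \emph{source} and each visit to state~$j$ as a \emph{sink} of integer mass $m_i\nu_j/m_j$; the rule defining $T_*$ is precisely the translation-invariant greedy allocation that matches each source to the first later time at which cumulative source and sink mass first balance. A direct computation in the spirit of Holroyd--Peres~\cite{HP05} then shows that under the Palm measure at sources—which coincides with $\mathbb{P}_i$ up to a normalising factor—the endpoint of the allocation lands on a sink and the state at that endpoint has distribution~$\nu$. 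Stationarity of the underlying chain upgrades this marginal statement to the distributional identity for the entire shifted trajectory.

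The principal obstacle is Step three: controlling the correlated charges $c_k = \nu_{X_k}/m_{X_k} - \mathbf{1}\{X_k=i\}/m_i$ through the Markov dynamics (rather than the independence available in the extra-head problem), and doing so in the null-recurrent case where $\mathbb{P}$ is only $\sigma$-finite. The integrality condition from Theorem~\ref{T:characterization_thm_intro}(b) is precisely what is needed here: it guarantees that every visit to~$j$ carries an integer number of sink charges, so that the greedy matching is well defined and the balance time $T_*$ always corresponds to the complete absorption of one unit of source mass.
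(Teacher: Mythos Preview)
Your outline is correct and follows the same architecture as the paper: reformulate via the running difference $S_n$, set up the translation-invariant greedy allocation, and invoke the mass-transport/Palm equivalence between balancing allocations and unbiased shifts (the paper's Propositions~\ref{P:transport_rule}--\ref{P:allocation_rule}, which package exactly the Holroyd--Peres computation you allude to). Two points deserve comment.

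First, for finiteness the paper does \emph{not} use an ergodic or oscillation argument. Instead it proves, via a one-line mass-transport identity (Lemma~\ref{prob}), that the event ``$\tau(k)<\infty$ for all $k$ with $X_k=i$'' (your finiteness) and the dual event ``every sink is reached from the left by some source at which the local-time difference vanishes'' (event \emph{E2}) must hold together $\p$-almost surely. Your route---sampling $S_n$ at successive returns to $i$ to get an i.i.d.\ mean-zero integer walk and appealing to Chung--Fuchs recurrence---also works for finiteness, but you would then need to establish \emph{E2} separately (e.g.\ by running the same argument on the time-reversed chain), because \emph{E2} is precisely what lets the balancing verification localise to a finite interval $[a,b]$.

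Second, and more importantly, the step you call ``a direct computation in the spirit of Holroyd--Peres'' is really two distinct pieces: (i) verifying that the allocation \emph{balances} $L^i$ and $L^\nu$, i.e.\ that each site $z$ with $X_z=j$ receives exactly $m_i\nu_j/m_j$ sources under $\tau$; and (ii) applying the general transport lemma to convert balancing into the distributional statement for $T_*^{-1}X$. Your sketch asserts (i) but does not prove it, and this is where the actual work and the integrality hypothesis live. The paper carries this out in Lemma~\ref{L:allocation_rule_lemma} by a pathwise ladder-index argument on the running difference $f_k^z$: the sources mapped to a given sink $z$ are exactly the $m_i\nu_j/m_j$ last-record times $k_1<\cdots<k_N$ at which $f_{k_n}^z$ first attains the levels $0,-\tfrac{1}{m_i},\ldots,\tfrac{1-N}{m_i}$. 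That the process hits each of these levels, rather than jumping over them, is where the integer condition $\tfrac{m_i}{m_j}\nu_j\in\Z$ is genuinely used. This combinatorial verification, not the Palm computation, is the heart of the proof and is missing from your sketch.
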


\begin{example}
We take a stationary three state Markov chain with transition probabilities given by \smash{$p_{12}=p_{32}=1$} and
\smash{$p_{21}=1-p$} and \smash{$p_{23}=p$}. If $1/p$ is an integer we can shift the chain so that it starts in the
third state and the chain property is preserved, as follows: Uncover the state at the origin.
If it is the third state we are done; if it is the second state we  move along the chain until the number of visits to the third state is at least {$p$} times  the number
of visits to the second state; if it is the first state we move until the number of visits to the third state is at least
\smash{$\frac{p}{1-p}$} times the number of visits to the first state. Note that if the state of the origin is the first state it is \emph{not} a solution
to wait one time step, whence you are in the second state, and then apply the strategy for start in the second state as this creates a bias in the backward chain.
\end{example}\pagebreak[3]

Skorokhod embedding problems usually concern embedding times with finite expectation. However in the extra head problem it is not possible to achieve finite expectation
of the random time~$T$. In fact Liggett~\cite{LT02} shows that in this case always $E\sqrt{T}=\infty$, see also~Holroyd and Liggett~\cite{HL01}. For the simple random walk on
the integers we expect in analogy to the Brownian motion case studied by Last et al.~\cite{LMT14} that always $E\sqrt[4]{T}=\infty$. Our aim here is to understand the general picture.
\smallskip

To this end  we now recall
the notion of asymptotic Green's function of the Markov chain. Given states $i,j\in\mathcal S$ we first define the normalized truncated Green's function by
$$a_{ij}(n)= \E_{i} L^j([0,n]) =  \frac{1}{m_{j}}\, \E_{i}\Big[ \sum_{k=0}^{n} \mathbbm{1} \{ X_{k} = j\}\Big],$$
that is $a_{ij}(n)$ gives the normalized expected number of visits to state $j$ between time~$0$ and time $n$,
by the Markov chain with initial state $X_{0}=i$. 
By Orey's ergodic theorem, see, e.g., Chen~\cite{XC99},
for any states $i,j,k,l\in\mathcal S$, the functions
$a_{ij}$ and $a_{kl}$ are asymptotically equivalent in the sense that
$$\lim_{n\to\infty} \frac{a_{ij}(n)}{a_{kl}(n)}=1.$$
We then define the \emph{asymptotic Green's function} $a(n)$ as the equivalence
class of the truncated Green's functions under asymptotic equivalence. Observe that
finiteness of moments is a class property, i.e.~expressions of the form $E[a(Y)]<\infty$,
where $a$ is an equivalence class and $Y$ an integer-valued random variable, are meaningful.
\medskip
\pagebreak[3]

\begin{thm}\label{T:moment_prop}
Let $X$ be a two-sided irreducible and recurrent Markov chain with $X_{0}=i$ and $\nu$ be any target measure
different from the Dirac measure~$\delta_i$.
If $T_*$ is the stopping time defined in~\eqref{D:tau},~then
\begin{enumerate}
\item[(i)]
$\E_{i}\big[a(T_*)^{1/2}\big]=\infty.$
\end{enumerate}
If additionally $\nu$ has finite support, then
\begin{enumerate}
\item[(ii)]
$\E_{i}\big[a(T_*)^{\beta}\big] < \infty$ for all $0\leq \beta < \frac12$.
\end{enumerate}
\end{thm}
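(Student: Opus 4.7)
My approach is to view $T_*$ as the first hitting time of zero by the signed local-time balance
\[
D_n := \sum_{j\in\s}\nu_j\, L^j([0,n]) - L^i([0,n]), \qquad T_* = \inf\{n\ge 0 : D_n\ge 0\},
\]
with strictly negative starting point $D_0 = -1/m_i$. A Dynkin-type martingale decomposition gives $D_n = A_n + M_n$ with $A_n := \E_i D_n$ and $(M_n)$ a mean-zero martingale. Orey's theorem yields $A_n = o(a(n))$, while direct second-moment calculations for additive functionals of the chain (together with the finite-support hypothesis of~(ii), which makes the step increments of $D$ uniformly bounded) should give $\Var_i D_n \asymp a(n)$. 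The heuristic is that $(D_n)$ behaves diffusively on the time scale $a(n)$ from a strictly negative starting position, and its first passage to $0$ inherits the standard $1/\sqrt{a(n)}$ first-passage tail.

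For the upper bound~(ii), I would combine the martingale decomposition with an Azuma--Hoeffding one-point estimate and a reflection or cyclic-exchangeability argument applied at successive returns of $X$ to $i$ to obtain
\[
\p_i(T_* > n) \le \frac{C}{\sqrt{a(n)}}.
\]
Plugging this into the layer-cake identity $\E_i[a(T_*)^\beta] = \beta\int_0^\infty t^{\beta-1}\p_i(a(T_*) > t)\,dt$ and changing variables via the inverse of $a$ then gives finiteness for every $\beta < 1/2$.

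For the lower bound~(i), I would argue by contradiction: assume $\E_i[\sqrt{a(T_*)}] < \infty$. By Burkholder--Davis--Gundy the stopped martingale $(M_{n\wedge T_*})$ is then uniformly integrable, and optional stopping yields $\E_i D_{T_*} = \E_i A_{T_*}$. The overshoot $D_{T_*}$ is strictly positive and (in the finite-support case) bounded, so the left-hand side is strictly positive, whereas the embedding property of $T_*$ supplied by Theorem~\ref{T:Existence_allocation_rule_intro} should force $\E_i A_{T_*} \le 0$ via a mass-transport identity in the spirit of Liggett~\cite{LT02} and Holroyd--Peres~\cite{HP05}. For targets $\nu$ with infinite support one would approximate $\nu$ by finitely supported truncations and use monotonicity of the embedding time in the target to pass the lower bound to the limit.

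The principal difficulty will be establishing the sharp scaling $\Var_i D_n \asymp a(n)$ and elevating the resulting one-point estimate to the first-passage tail $\p_i(T_* > n) \le C/\sqrt{a(n)}$ in the null-recurrent regime, where $a(n)$ may grow only slowly (as in the $\sqrt n$ behaviour for the simple random walk on $\Z$). A classical CLT is not directly applicable; the estimate will likely need an excursion decomposition of $D$ between consecutive visits of $X$ to~$i$, reducing the analysis to a random walk with i.i.d.\ increments for which Kemperman-style first-passage asymptotics are available.
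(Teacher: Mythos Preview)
Your outline contains several genuine gaps, and the approach you sketch for~(i) does not close.

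First, the decomposition $D_n=A_n+M_n$ with $A_n:=\E_i D_n$ does not make $M_n$ a martingale: the correct Doob compensator has increments $\E[D_n-D_{n-1}\mid\mathcal F_{n-1}]=\sum_j p_{X_{n-1},j}(\nu_j/m_j-\1\{j=i\}/m_i)$, which are random. With the true compensator you lose the identification $A_n=o(a(n))$ on which your contradiction rests.

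Second, even granting a correct martingale part, your BDG step fails in the null-recurrent regime. The bracket satisfies $[M]_{T_*}\asymp T_*$ (bounded increments), so uniform integrability of $M_{\cdot\wedge T_*}$ would require $\E_i[\sqrt{T_*}]<\infty$, not merely $\E_i[\sqrt{a(T_*)}]<\infty$. When $a(n)\sim\sqrt n$ or $a(n)\sim\log n$ these are very different hypotheses, and the former is exactly what you are trying to contradict. Moreover, the overshoot $D_{T_*}$ can vanish (it lies in $\tfrac1{m_i}\Z_{\ge0}$), so ``strictly positive'' is not automatic, and no mass-transport identity of the type you invoke pins down the sign of $\E_i A_{T_*}$.

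The paper proceeds exactly along the lines you sketch in your \emph{final} paragraph, but from the outset. Sampling $D$ at the return times $T_k$ to state~$i$ gives an i.i.d.\ mean-zero walk $S_k=\sum_{\ell\le k}\xi_\ell$ with $\xi_\ell=1-m_iL^\nu([T_{\ell-1},T_\ell))$, which is \emph{skip-free to the right} by the integer condition. Classical first-passage results then give $\E[N^{1/2}]=\infty$ (no variance assumption needed) and, under finite variance, $P(N>n)\sim C n^{-1/2}$. For~(ii) this is combined with moment bounds for the excursion lengths (including a key estimate $\E_i[a_{ii}(T_1)^\alpha]<\infty$ for all $\alpha<1$) to transfer the tail from $N$ to $T_*=t_N$. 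For~(i) the essential missing ingredient in your plan is a Barlow--Yor type inequality
\[
\E_i\big[L^i([0,T_*])^{1/2}\big]\le C\,\E_i\big[a_{ii}(T_*)^{1/2}\big],
\]
which converts the infinite $\tfrac12$-moment of the excursion count $N\le m_iL^i([0,T_*])+1$ into the claimed $\E_i[a(T_*)^{1/2}]=\infty$. Without such an inequality there is no passage from the discrete ``excursion clock'' to the Green's-function scale $a(\cdot)$, and this is precisely where your martingale/BDG route breaks down.
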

\smallskip

As $a(n)$ cannot grow faster than $n$, our solutions~$T_*$ always have `bad' moment properties
as even for the nicest Markov chain $T_*$ can never have finite square root moments. 
However, our next theorem shows that no other solution of the embedding problem has better moment properties than~$T_*$.
\smallskip

In fact, it turns out that~$T_*$ has a strong optimality property, as it simultaneously minimizes all concave moments of non-negative solutions of
the embedding problem. This striking result is new even for the case of the extra head problem and therefore, in our opinion,
constitutes the most interesting contribution in this paper.%
\smallskip%

\begin{thm}\label{T:Optimality}
Let $X$ be a two-sided irreducible and recurrent Markov chain with $X_{0}=i$ and $\nu$ be a target measure
satisfying the conditions in Theorem~\ref{T:Existence_allocation_rule_intro}.  If $T_{*}$ is the solution of the Skorokhod embedding
problem constructed in~\eqref{D:tau} and $T$ any other non-negative (possibly randomized) solution,
then
$$\phantom{X^{X^{X^X}}}
\E_{i}\big[ \psi(T_{*}) \big]  \le \E_{i}^{_{^\oplus}}\big[ \psi(T) \big], \\[1mm]
$$
for any non-negative concave function~$\psi$ defined on the non-negative integers,
where the expectation on the right is with respect to the chain as well as any possible extra randomness used to define~$T$.
\end{thm}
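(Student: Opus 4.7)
My plan reduces the claim to the increasing-concave (icv) stochastic order $T_*\le_{\mathrm{icv}}T$, i.e.\ the one-parameter family
\[
\E_i\big[T_*\wedge n\big]\;\le\;\E_i\big[T\wedge n\big]\qquad\text{for every }n\in\N_0, \qquad(\ast)
\]
and then proves $(\ast)$ by a Palm-theoretic mass-transport argument exploiting the greedy structure of $T_*$. The reduction uses that any non-negative concave $\psi$ on $\N_0$ is necessarily non-decreasing, since a strict decrease $\psi(k{+}1)-\psi(k)<0$ combined with concavity would force $\psi$ to become negative further on; hence $c_k:=\psi(k{+}1)-\psi(k)\ge 0$ is a non-increasing sequence and $\E_i\psi(T)=\psi(0)+\sum_{k\ge 0}c_k\,\p_i(T>k)$, so Abel summation identifies $\E_i\psi(T_*)\le\E_i\psi(T)$ for all such $\psi$ with $(\ast)$. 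The randomized case is absorbed by conditioning on the extra randomness used to define~$T$.

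To prove $(\ast)$, I lift the problem to the two-sided stationary chain under its $\sigma$-finite invariant law~$\p$ and recast both embeddings as translation-invariant mass transports. The cocycle $\tau(n):=n+T\circ\theta^n$ maps each source $n\in\{k:X_k=i\}$ to a sink $\tau(n)$ at which $X_{\tau(n)}$ has law~$\nu$, with Palm inversion $m_i\,\E_i[f(T)]=\E[\mathbbm{1}\{X_0=i\}\,f(\tau(0))]$, and similarly for the cocycle $\tau_*$ of~$T_*$. Setting $Y_k:=\mathbbm{1}\{X_k=i\}-m_i\nu_{X_k}/m_{X_k}$ and $Z_n:=\sum_{k=0}^n Y_k$, the defining inequality~\eqref{D:tau} reads $Z_n\le 0$, so $T_*$ is the first passage of $Z$ (with $Z_0=1$) into $(-\infty,0]$, and $\tau_*$ is the greedy, Gale--Shapley-stable allocation of sources to sinks on the right, in direct analogy with~\cite{HP05,LT02}. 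The closed-form identity $\E_i[T_*\wedge n]=\sum_{s=0}^{n-1}\p_i\big(\min_{0\le r\le s}Z_r>0\big)$ then follows immediately from the definition of $T_*$ as a first passage.

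The decisive step is the matching lower bound $\E_i[T\wedge n]\ge\sum_{s=0}^{n-1}\p_i(\min_{r\le s}Z_r>0)$ for an arbitrary valid $T$, which I would establish by a Palm-averaged rearrangement argument: the event $\{\min_{r\le s}Z_r>0\}$ says that cumulative source mass on $[0,s]$ strictly exceeds cumulative $\nu$-weighted sink mass, and the marginal constraint $X_\tau\sim\nu$ along every site of the cocycle then forces that, in Palm average, any valid translation-invariant transport $\tau$ must delay at least one source beyond $s$, yielding $\p_i(T>s)\ge\p_i(\min_{r\le s}Z_r>0)$. The main obstacle I anticipate is the rigorous implementation of this ``Palm-averaged mass conservation'' bound when $T$ is neither a stopping time nor sample-wise deterministic, since then matching decisions may depend on future information and the greedy monotone matching is not globally concave-optimal on the line. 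I plan to handle this by conditioning on the full chain together with the auxiliary randomness so that the allocation becomes sample-wise deterministic, truncating the time axis to a large window $[-N,N]$, performing a crossing/uncrossing swap that aligns $\tau$ with $\tau_*$ on sources and sinks well inside the window, and passing to the limit via the a.s.\ finiteness of $T_*$ granted by Theorem~\ref{T:Existence_allocation_rule_intro} together with Birkhoff's ergodic theorem to control the boundary terms at $\pm N$.
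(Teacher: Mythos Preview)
Your reduction to the family $\E_i[T_*\wedge n]\le\E_i[T\wedge n]$ is correct and equivalent to the theorem. However, your route to $(\ast)$ goes through the \emph{pointwise} inequality $\p_i(T>s)\ge\p_i(\min_{r\le s}Z_r>0)=\p_i(T_*>s)$, which is the stochastic order $T_*\le_{\mathrm{st}}T$ and strictly stronger than the increasing-concave order the theorem asserts. Your heuristic for this bound is not a proof: on the event $\{\min_{r\le s}Z_r>0\}$ it is true that \emph{some} source in $[0,s]$ must be transported beyond $s$, but nothing forces this to be the source at the origin---any other source in $[0,s]$ could absorb the deficit while $\tau(0)\le s$. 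Palm inversion does not rescue this, because once you condition on $\{T_*>s\}$ the origin is no longer a typical Palm point. Whether the stronger stochastic dominance actually holds is a separate question; your sketch does not settle it, and crucially, the crossing/uncrossing device you invoke as a fix cannot settle it either, since indicators $\mathbbm{1}_{(s,\infty)}$ are not concave.

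Your fallback in the final paragraph---crossing/uncrossing swaps plus an ergodic limit---is essentially the paper's method, and it proves the theorem \emph{directly}, without passing through either $(\ast)$ or the pointwise bound. The paper fixes a concave $\psi$, works on carefully chosen \emph{excursion intervals} $[\sigma_n,\tau_n-1]$ (on which the cumulative source--sink imbalance starts and ends at zero and stays non-negative in between), and shows by successively repairing crossings that on each such interval the $\psi$-cost of $\tau_*$ does not exceed that of $\theta$; the concavity inequality $\psi(a{+}b)+\psi(b{+}c)\ge\psi(a{+}b{+}c)+\psi(b)$ is the workhorse. An ergodic theorem then converts the pathwise comparison on growing excursions into $\E_i\psi(T_*)\le\E_i^{\oplus}\psi(T)$. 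Two points your plan should absorb: the uncrossing argument yields the concave-cost comparison on each interval, not a tail bound, so the detour through $(\ast)$ buys nothing; and the choice of excursion intervals rather than arbitrary windows $[-N,N]$ is what makes the argument close, because on an excursion $\tau_*$ is self-contained---every source and every sink in the interval is matched within it---which is precisely what makes the uncrossed transport coincide with $\tau_*$ and the boundary terms disappear.
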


\pagebreak[3]

Theorem~\ref{T:Optimality} is inspired by exciting recent developments connecting the classical Skorokhod embeddings for Brownian motion
with optimal transport problems. In a recent paper, Beiglb\"ock, Cox and Huesmann~\cite{BH13} exploit this connection to characterize certain
solutions to the Skorokhod embedding problem by a geometric property. In a similar vein, our solution $T_*$ is characterized by a geometric
property, the `non-crossing' condition, which yields the optimality. See also our concluding remarks in Section~\ref{S:Conclusion} for possible extensions of this result.
\medskip

\begin{example}
Suppose the underlying Markov chain is \emph{positive recurrent}. Then the asymptotic Green's function satisfies $a(n)\sim n$.
Therefore all non-negative solutions $T$ of the Skorokhod embedding problem satisfy \smash{$\E_{i}[\sqrt{T}]=\infty$}, while the solution
constructed in Theorem~\ref{T:Existence_allocation_rule_intro} satisfy \smash{$\E_{i}[T_*^{\beta}] < \infty$} for all $0\leq\beta<1/2$. This applies
in particular to Examples~\ref{Ex:extra_head} and~\ref{Ex:extra_head_with_pattern}.
\end{example}

\pagebreak[3]

\begin{example}
The situation is much more diverse for \emph{null-recurrent} chains. Looking at Example~\ref{Ex:SRW}, for a two-sided
simple symmetric random walk on the integers we have \smash{$a(n) \sim \sqrt{n}$}. Hence the solution \smash{$T_*$}  constructed in
Theorem~\ref{T:Existence_allocation_rule_intro} satisfies \smash{$\E_{i}[T_*^{\alpha}] < \infty$} for all $0\leq\alpha<1/4$,
while any non-negative solution  has infinite $1/4$ moment. This is similar to the case of Brownian motion on the line, which is discussed
in~\cite{LMT14}, although in that paper other than here the discussion is restricted to solutions which are non-randomized stopping times.
In contrast to this, for simple symmetric random walk on the square lattice~\smash{$\Z^{2}$} we have $a(n) \sim \log{n}$, and therefore
\smash{$\E_{i}[\sqrt{\log{T}}]$} is infinite for any non-negative solution~$T$, while the solution~\smash{$T_*$}  constructed in
Theorem~\ref{T:Existence_allocation_rule_intro} satisfies \smash{$\E_{i}[(\log{T_*})^{\alpha}]<\infty$}, for all $0\leq\alpha<1/2$.
\end{example}
\pagebreak[3]

\section{Relating embedding and allocation problems}\label{S:Characterization}

In this section we relate our embedding problem to an equivalent allocation problem. The section specializes 
some results from Last and Thorisson~\cite{LT09} which are themselves  based on ideas from~\cite{HP05}. We give complete proofs of the known facts in order  to keep this paper self-contained. Generalizing from~\cite{LMT14} we call a random time~$T$ an \emph{unbiased shift} of
the Markov chain $X$ if the shifted process $T^{-1}X$ is a two-sided Markov chain
with the same transition matrix as~$X$. Note that this definition allows $T$ to be randomized, i.e. it does not have
to be a function of the sample chain~$X$ alone.
\smallskip

Let $\Omega= \{(\omega_{i})_{i \in \Z} \colon \omega_{i} \in \s \}$ be the set of trajectories of $X$. A \emph{transport rule} is a
measurable function  $\theta \colon\Omega \times \Z \times \Z \to [0,1]$ satisfying
$$\sum_{y \in \Z}\theta_\omega(x,y)=1 \qquad \mbox{ for all $x \in \Z$ and $\p$-almost every $\omega$.}$$
Note that we write the dependence on the trajectory $\omega$ by a subindex, which we drop from the notation whenever convenient.
Transport rules are interpreted as distributing mass from $x$ to $\Z$ in such a way that the site $y$ gets a proportion $\theta(x,y)$
of the mass. For sets $A,B \subset\Z$ we define
\[
\theta_{\omega}(A,B) := \sum_{x \in A, y \in B}\theta_{\omega}(x,y).
\]
A transport rule $\theta$ is called \emph{translation invariant} if
$$\theta_{z\omega}(x+z,y+z)=\theta_{\omega}(x,y),$$
for all $\omega \in \Omega$ and $x, y, z \in \Z$, where $z\omega$, defined by \smash{$z\omega_n=\omega_{n-z}$} for any $n \in \Z$,
is the trajectory shifted by $-z$.  A transport rule \emph{balances} the random measures $\xi$ and $\zeta$ on~$\Z$ if
\begin{equation}\label{E:balanced_transp_prop}
\sum_{z \in \Z}\theta_{\omega}(z,A)\xi(z)=\zeta(A),
\end{equation}
for any $A \subset \Z$ and $\p$-almost all $\omega$.
%
Given a two-sided Markov chain $X$ as before recall the definition of the local times $L^{i}$, and
given a probability measure $\nu=(\nu_i \colon i\in\s)$ we further define
\[
L^{\nu}=\sum_{i \in \s}\nu_i \,L^{i}.
\]

\pagebreak[3]

\begin{prop}\label{P:transport_rule}
Assume that there is a measurable family of probability measures $({\mathbb Q}_\omega \colon \omega\in\Omega)$ on some measurable space $\Omega'$
and $T\colon \Omega \times \Omega' \to \Z$ is measurable. The random time $T$ and a translation invariant  transport rule $\theta$ are associated if
\begin{equation}\label{E:P_Theta}
\mathbb{Q}_\omega\big(\omega'\in\Omega' \colon T(\omega, \omega')=t\big)=\theta_{\omega}(0,t) \qquad \mbox{ for all } t\in \Z \mbox{ and $\mathbb P$-almost all } \omega\in\Omega.
\end{equation}
For any probability measure $\mu=(\mu_i \colon i\in \s)$ we define the probability measure~$\p_\mu^{_{^\oplus}}$ on $\Omega\times\Omega'$ by
\begin{equation}\label{E:P_mu}
\prob_\mu^{_{^\oplus}}(d\omega \, d\omega')=\sum_{i\in\s} \mu_i \,\prob_i(d\omega) \, {\mathbb Q}_\omega (d\omega').
\end{equation}
Then, if $\mu, \nu$ is any pair of
probability measures on $\s$ and the random time $T$ and translation invariant transport rule $\theta$ are associated, the following statements are equivalent.
\begin{itemize}
\item[$(a)$] Under $\prob_\mu^{_{^\oplus}}$ the random time  $T$ is an unbiased shift of $X$ and $X_T$ has law $\nu$.
\item[$(b)$] The transport rule $\theta$  balances $L^\mu$ and $L^\nu$ $\p$-almost everywhere.
\end{itemize}
\end{prop}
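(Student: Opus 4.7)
I would prove both implications from a single master identity. For any measurable subset $C$ of trajectories and any $j\in\s$, unpacking the definitions~\eqref{E:P_mu} and~\eqref{E:P_Theta}, and using that the shifted chain $T^{-1}X$ corresponds to the trajectory $(-T)\omega$, gives
$$\prob_\mu^{_{^\oplus}}\big(T^{-1}X\in C,\; X_T=j\big) \;=\; \sum_{i\in\s}\mu_i\,\E_i\Big[\sum_{t\in\Z}\theta_\omega(0,t)\,\1\{(-t)\omega\in C,\; \omega_t=j\}\Big].$$
I would then rewrite the right hand side through the $\sigma$-finite stationary measure $\prob = \sum_i m_i\prob_i$, which introduces the density $L^\mu(\{0\}) = \mu_{X_0}/m_{X_0}$. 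For each fixed $t$, applying translation invariance of $\prob$ together with $\theta_{t\omega'}(0,t) = \theta_{\omega'}(-t,0)$ (from translation invariance of $\theta$), and then reindexing $z=-t$, converts the expression into the master formula
$$\prob_\mu^{_{^\oplus}}\big(T^{-1}X\in C,\; X_T=j\big) \;=\; \E\Big[\1\{X\in C,\; X_0=j\}\sum_{z\in\Z} L^\mu(z)\,\theta_\omega(z,0)\Big]. \qquad(\star)$$

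For $(b)\Rightarrow(a)$, the balancing condition evaluated at the singleton $A=\{0\}$ reduces the inner sum in $(\star)$ to $L^\nu(\{0\}) = \nu_{X_0}/m_{X_0}$, so the right hand side simplifies to $(\nu_j/m_j)\,\prob(X\in C,\, X_0=j) = \nu_j\,\prob_j(C)$. Summing on $j$ identifies the law of $T^{-1}X$ under $\prob_\mu^{_{^\oplus}}$ as $\sum_j\nu_j\prob_j$, which is exactly the unbiased-shift property together with $X_T\sim\nu$. For the converse $(a)\Rightarrow(b)$, the joint law of $(T^{-1}X, X_T)$ under $(a)$ is known, and the left side of $(\star)$ equals $\nu_j\prob_j(C) = \E[\1\{X\in C,\, X_0=j\}L^\nu(\{0\})]$; summing over $j$ forces
$$\E\Big[\1\{X\in C\}\sum_z L^\mu(z)\theta_\omega(z,0)\Big] = \E\big[\1\{X\in C\}\,L^\nu(\{0\})\big] \quad\text{for every measurable } C\subset\Omega,$$
whence $\sum_z L^\mu(z)\theta_\omega(z,0) = L^\nu(\{0\})$ holds $\prob$-almost surely.

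The main obstacle is upgrading this single-point equality to the full balancing condition on arbitrary $A\subset\Z$. I would use translation invariance once more: evaluating the established identity on the shifted trajectory $(-k)\omega$ and exploiting the equivariance of both $\theta$ and the local times yields $\sum_z L^\mu(z)\theta_\omega(z,k) = L^\nu(\{k\})$ for each $k\in\Z$, $\prob$-almost surely. Since $\theta_\omega(z,\cdot)$ is a probability measure on $\Z$, Fubini for non-negative sums extends the identity from singletons to arbitrary $A$, yielding~$(b)$ and closing the argument.
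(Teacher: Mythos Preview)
Your proposal is correct and follows essentially the same route as the paper. Both arguments hinge on the identity $\E_\mu^\oplus[f(T^{-1}X)]=\E\big[f(X)\sum_{z}\theta_\omega(z,0)L^\mu(z)\big]$ (your $(\star)$ is this with $f=\1_C\cdot\1\{X_0=j\}$), and both then read off $(b)\Rightarrow(a)$ directly and deduce the single-point balancing for $(a)\Rightarrow(b)$, extending to general $A$ via translation invariance exactly as you describe. The only cosmetic difference is that where you shift the integration variable using stationarity of~$\prob$ to obtain $(\star)$, the paper packages the same step as an application of a mass-transport lemma ($\sum_y m(x,y)=\sum_y m(y,x)$ for diagonally invariant~$m$); these are equivalent maneuvers.
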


Note that in the last proposition unbiased shifts need not be non-randomized. The transport rules associated to non-randomized shifts are the  \emph{allocation rules}. These are given by a measurable map $\tau\colon \Omega \times \Z \to \Z$
such that $\theta_\omega(x,y)=1$ if $\tau_\omega(x)=y$ and zero otherwise.

\begin{prop}\label{P:allocation_rule}
If the random time $T$ in Proposition~\ref{P:transport_rule} is non-randomized, then there is an associated transport rule $\theta$,
which is an allocation rule. Conversely if $\theta$ in Proposition~\ref{P:transport_rule} is an allocation rule, then there exists an associated
non-randomized random time~$T$.
\end{prop}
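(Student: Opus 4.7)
The plan is to establish a natural bijection between non-randomized random times and translation invariant allocation rules via evaluation at the origin. For the forward direction, start with a non-randomized $T\colon\Omega\to\Z$ and seek an allocation map $\tau\colon\Omega\times\Z\to\Z$ whose transport rule $\theta_\omega(x,y)=\mathbbm{1}\{\tau_\omega(x)=y\}$ is translation invariant. The invariance identity $\tau_{z\omega}(x+z)=\tau_\omega(x)+z$, together with the normalization $\tau_\omega(0)=T(\omega)$ needed for \eqref{E:P_Theta} to hold, leaves no freedom and forces the definition
$$\tau_\omega(x) := x + T\bigl((-x)\omega\bigr), \qquad x\in\Z,$$
which is measurable because $T$ and the shift are. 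A short calculation using the associativity identity $(-(x+z))(z\omega)=(-x)\omega$ for the shift action confirms translation invariance of the resulting $\theta$, and since $T$ does not depend on $\omega'$, the required equality \eqref{E:P_Theta} reduces to the tautology $\mathbbm{1}\{T(\omega)=t\}=\mathbbm{1}\{\tau_\omega(0)=t\}$.

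For the converse, given an allocation rule $\theta$ coming from a map $\tau\colon\Omega\times\Z\to\Z$, simply set $T(\omega,\omega'):=\tau_\omega(0)$; this is a measurable function on $\Omega\times\Omega'$ that depends only on $\omega$, hence is non-randomized in the sense of the paper, and \eqref{E:P_Theta} is immediate from $\theta_\omega(0,t)=\mathbbm{1}\{\tau_\omega(0)=t\}$.

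There is no substantial obstacle: the only delicate point is keeping the shift conventions straight in the verification of translation invariance in the forward direction. The content of the proposition is that, thanks to the translation invariant nature of the setup, a random time that is a deterministic function of the sample chain determines, and is determined by, a translation invariant choice of target site for \emph{every} source site, via evaluation at and translation to the origin.
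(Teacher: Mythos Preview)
Your proposal is correct and follows essentially the same route as the paper: both define $\tau_\omega(x)=x+T((-x)\omega)$ in the forward direction and $T(\omega)=\tau_\omega(0)$ in the converse. If anything, you are more explicit than the paper, which simply asserts translation invariance without writing out the shift calculation you sketch.
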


We give proofs of the propositions for completeness. For a transport rule $\theta$ we define
\begin{equation}\label{E:def_J}
J_{\mu}(\omega):=\sum_{k \in \Z} \theta_{\omega}(k,0)\, L^{\mu}(k),
\end{equation}
which is interpreted as the total mass received by the origin. We recall the following simple fact,  see~\cite{HP05} for
a more general version.

\begin{lemma}\label{L:mass_transport_pr}
Let $m\colon \Z \times \Z \to [0, \infty ]$ be such that $m(x+z,y+z)=m(x,y)$ for all $x,y,z \in \Z$. Then
\[
\sum_{y \in \Z}m(x,y) = \sum_{y \in \Z}m(y,x).
\]
\end{lemma}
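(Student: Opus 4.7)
The plan is to exploit translation invariance to reduce $m$ to a function of a single integer variable, after which the claim becomes a trivial change of summation index. First I would apply the hypothesis with the translation $z=-x$, which gives
\[
m(x,y) \;=\; m(0,\,y-x) \qquad \text{for all } x,y \in \Z.
\]
Thus $m$ depends only on the difference $y-x$, and it is convenient to introduce $f(k):=m(0,k)$ for $k\in\Z$, so that $m(x,y)=f(y-x)$.

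With this reduction both sums in the statement collapse to the same quantity. For the left-hand side the substitution $k=y-x$ yields
\[
\sum_{y\in\Z} m(x,y) \;=\; \sum_{k\in\Z} f(k),
\]
while for the right-hand side the substitution $k=x-y$ yields the same expression. Since both substitutions are bijections of $\Z$ onto itself and all terms lie in $[0,\infty]$, the rearrangements are legitimate without further justification, and the two sums coincide.

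There is essentially no obstacle here: the lemma is a one-line consequence of the observation that translation-invariant kernels on $\Z$ are convolution kernels in disguise, and transposing such a kernel preserves its total mass. The reason to isolate this as a separate lemma is that it will be invoked repeatedly in the subsequent allocation/transport arguments (typically to exchange expressions of the form $\sum_y \theta_\omega(x,y)$ with $\sum_y \theta_\omega(y,x)$ after taking expectations against the stationary law of $X$), where recording the general principle once keeps the later calculations clean.
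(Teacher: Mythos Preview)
Your proof is correct. The paper does not actually prove this lemma: it merely records it as a ``simple fact'' and refers to \cite{HP05} for a more general version, so there is no argument to compare against. Your reduction to a single-variable function via $m(x,y)=m(0,y-x)$ followed by reindexing is exactly the standard justification one would expect here.
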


The following calculation is at the core of the proof.

\begin{lemma}\label{L:product}
Suppose that $T$ and $\theta$ are related by~\eqref{E:P_Theta}. Then, for any measurable function
$f \colon \Omega \to [0,\infty]$, we have
\[
\E_{\mu}^{_{^\oplus}}\big[f(T^{-1}X)\big]= \E \big[J_{\mu}(X)f(X)\big],\\[2mm]
\]
where $\E_{\mu}^{_{^\oplus}}$ is the expectation with respect to $\prob_\mu^{_{^\oplus}}$ defined in \eqref{E:P_mu}.
\end{lemma}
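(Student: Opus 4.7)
The plan is to unfold both sides and connect them via the mass transport principle (i.e.\ translation invariance of the stationary measure~$\prob$ combined with translation invariance of~$\theta$). All quantities involved are non-negative, so Tonelli will justify every exchange of sums and expectations.

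First I would rewrite the left-hand side by using the definitions~\eqref{E:P_mu} and~\eqref{E:P_Theta}. Since the $\Q_{\omega}$-distribution of $T$ puts mass $\theta_{\omega}(0,t)$ on $t\in\Z$, one obtains
\[
\E_{\mu}^{_{^\oplus}}\big[f(T^{-1}X)\big]
=\sum_{i\in\s}\mu_{i}\,\E_{i}\Big[\sum_{t\in\Z}\theta_{\omega}(0,t)\,f(t^{-1}\omega)\Big].
\]
Next I would lift the sum over~$i$ to an integral against the stationary $\sigma$-finite measure $\prob=\sum_{i}m_{i}\prob_{i}$. Writing $\mu_{i}=(\mu_{i}/m_{i})\,m_{i}$ and pulling the factor $\mu_{X_{0}}/m_{X_{0}}$ inside the expectation gives
\[
\E_{\mu}^{_{^\oplus}}\big[f(T^{-1}X)\big]
=\sum_{t\in\Z}\E\Big[\tfrac{\mu_{X_{0}}}{m_{X_{0}}}\,\theta_{\omega}(0,t)\,f(t^{-1}\omega)\Big].
\]

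The heart of the argument is the next step: exploit that~$\prob$ is translation invariant (this is exactly why we lifted to the stationary measure) to perform the change of variables $\omega\mapsto t\omega$ inside each summand. Under this substitution $X_{0}$ becomes $X_{-t}$, and the translation invariance of~$\theta$ gives $\theta_{t\omega}(0,t)=\theta_{\omega}(-t,0)$, while $t^{-1}(t\omega)=\omega$ leaves $f$ unchanged. Reindexing $k=-t$ yields
\[
\E_{\mu}^{_{^\oplus}}\big[f(T^{-1}X)\big]
=\E\Big[f(\omega)\sum_{k\in\Z}\theta_{\omega}(k,0)\,\tfrac{\mu_{X_{k}}}{m_{X_{k}}}\Big].
\]
Finally I would identify the inner sum with $J_{\mu}(X)$, using that
\[
L^{\mu}(\{k\})=\sum_{j\in\s}\mu_{j}\,\tfrac{\mathbbm{1}\{X_{k}=j\}}{m_{j}}=\tfrac{\mu_{X_{k}}}{m_{X_{k}}},
\]
so summing over~$k$ against $\theta_{\omega}(k,0)$ reproduces the definition~\eqref{E:def_J} of $J_{\mu}(\omega)$, giving $\E[J_{\mu}(X)f(X)]$.

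The only place requiring care is the translation-invariance step: one must check that the substitution $\omega\mapsto t\omega$ really preserves~$\prob$ (which it does, because $(m_{i})$ is stationary for~$p$) and that $\theta$'s translation invariance applies with the correct sign convention for $z\omega$. Lemma~\ref{L:mass_transport_pr} in fact encapsulates precisely this symmetry, but I would prefer to carry out the substitution by hand since it makes the role of each hypothesis transparent. There is no deeper obstacle — once the bookkeeping with $z\omega$ is done correctly, the identity drops out immediately.
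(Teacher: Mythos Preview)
Your proof is correct and follows essentially the same route as the paper's. The only cosmetic difference is that the paper encodes the weight $\mu_{X_0}/m_{X_0}$ as $\sum_i \mu_i L^i(0)$ and then invokes Lemma~\ref{L:mass_transport_pr} to swap the arguments of the transport kernel, whereas you keep the weight in the form $\mu_{X_0}/m_{X_0}$ and carry out the shift $\omega\mapsto t\omega$ explicitly; as you yourself note, these are the same step.
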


\begin{proof}[Proof of Lemma~\ref{L:product}]
Writing $\p_\mu=\sum_{i\in\Z} \mu_i \p_i$ we get
\begin{align*}
\E_{\mu}^{_{^\oplus}}\big[f(T^{-1}X)\big]&=\int d\p_{\mu}(\omega) \int  f\big(T(\omega, \omega')^{-1}X(\omega)\big) \, {\mathbb Q}_\omega(d\omega') \\
&=\int d\p_{\mu}(\omega) \sum_{t \in \Z} \, {\mathbb Q}_\omega(T=t) f(t^{-1}X(\omega)).
\end{align*}
Using relation~\eqref{E:P_Theta} and the definition of $\p_\mu$ we continue with
\begin{align*}
\phantom{\E_{\mu}^{_{^\oplus}}\big[f(T^{-1}X)\big] xxx}&=\sum_{i\in\Z}\mu_{i}\int d\p_{i}(\omega) \sum_{t \in \Z} \theta_{\omega}(0,t)f(t^{-1}X(\omega)) \\
&= \sum_{i\in\Z}\mu_{i}\int d\p(\omega) \sum_{t \in \Z} \theta_{\omega}(0,t)L^{i}(0)f(t^{-1}X(\omega)),
\end{align*}
as $L^{i}(0)=\frac1{m_i}$ and $L^{j}(0)=0$ under $\p_i$ for $j\not=i$. Applying Lemma~\ref{L:mass_transport_pr}
gives
\begin{align*}
\phantom{\E_{\mu}\big[f(T^{-1}X)\big] xxx}
&=\sum_{i\in\Z} \mu_{i} \int d\p(\omega) \sum_{t \in \Z} \theta_{\omega}(t,0)L^{i}(t)f(X(\omega)) \phantom{xx}\\
& =  \int d\p(\omega) \sum_{t \in \Z} \theta_{\omega}(t,0) L^{\mu}(t)f(X(\omega))\\
&=\E\big[J_{\mu}(X)f(X)\big],
\end{align*}
using first the definition of $L^\mu$ and second the definition of $J_{\mu}(X)$.
\end{proof}

\begin{proof}[Proof of Proposition~\ref{P:transport_rule}]
First assume that $\theta$ is a translation invariant transport rule. Then, for any non-negative measurable $f$, by
Lemma~\ref{L:product}, we have
\begin{align}
\E_{\mu}^{_{^\oplus}}\big[f(T^{-1}X)\big] =\E\big[J_{\mu}(X)f(X)\big]=
\E\big[ \sum_{k \in \Z} \theta_{\omega}(k,0)\, L^{\mu}(k) f(X)\big]. \label{bas}
\end{align}
If $\theta$ balances $L^\mu$ and $L^\nu$ this equals
\begin{align*}
\E\big[ L^{\nu}(0) f(X) \big]
=\sum_{j \in\Z}\nu_{j}\, \E\big[L^{j}(0)f(X)\big]
=\sum_{j \in \Z} \nu_{j}\, \E_{j}\big[f(X)\big]=\E_{\nu}\big[ f(X)\big].
\end{align*}
Hence under $\p_\mu^{_{^\oplus}}$ the random variable $T^{-1}X$ has the law of $X$ under~$\prob_\nu$.
In other words $T$ is an unbiased shift and $X_T$ has distribution~$\nu$.

Conversely, assume that $T$ is an unbiased shift and $X_T$ has distribution~$\nu$. Hence
$\E_{\mu}^{_{^\oplus}}[f(T^{-1}X)] =\E_{\nu}[f(X)]= \E[ L^{\nu}(0) f(X)].$
Plugging this into~\eqref{bas} gives
$$\E\big[ \sum_{k \in \Z} \theta_{\omega}(k,0)\, L^{\mu}(k) f(X)\big]=\E\big[ L^{\nu}(0) f(X) \big].$$
As $f$ was arbitrary we get
$\sum_{k \in \Z} \theta_{\omega}(k,0)\, L^{\mu}_\omega(k)=L^{\nu}_\omega(0)$ for $\p$-almost all~$\omega$,
where we emphasise the dependence of the measures $L^\mu$ and $L^\nu$ on the trajectories by a subscript. As
$\theta$ is translation invariant we get, substituting $m:=k-\ell$,
\begin{align*}
\sum_{k \in \Z}\theta_{\omega}(k,A)\, L^\mu_\omega(k) & =
\sum_{k \in \Z}\sum_{\ell \in A} \theta_{\omega}(k,\ell)\, L^{\mu}_\omega(k)
= \sum_{\ell \in A} \sum_{m \in \Z} \theta_{-\ell\omega}(m,0)\, L^{\mu}_{-\ell\omega}(m)\\
& = \sum_{\ell \in A} L^{\nu}_{-\ell\omega}(0)= \sum_{\ell \in A} L^{\nu}_{\omega}(\ell) = L^{\nu}_{\omega}(A),
\end{align*}
for every $A\subset\Z$ and $\p$-almost every~$\omega$.
\end{proof}
\pagebreak[3]

\begin{proof}[Proof of Proposition~\ref{P:allocation_rule}]
Suppose $T=T(\omega)$ is non-randomized. Define $\tau_\omega\colon \Z\to\Z$ by $\tau_\omega(k)=T(-k\omega)+k$
and let $\theta_\omega(x,y)=1$ if $\tau_\omega(x)=y$ and zero otherwise. Then $\theta$ is a translation invariant
allocation rule. Moreover, $\Q_\omega(T=t)=\mathbbm{1}\{t=T(\omega)\}=\mathbbm{1}\{t=\tau_\omega(0)\}=\theta_\omega(0,t)$,
hence~$T$ and $\theta$ are associated.
Conversely, if $\theta$ is a translation invariant allocation rule given by $\tau\colon\Omega\times\Z \to \Z$ define
a non-randomized time $T$ by $T=\tau_\omega(0)$. As before, $\Q_\omega(T=t)=\mathbbm{1}\{t=T(\omega)\}=\mathbbm{1}\{t=\tau_\omega(0)\}
=\theta_\omega(0,t)$, and hence~$T$ and $\theta$ are associated.
\end{proof}

\section{Existence of allocation rules: Proof of Theorems~1 and~2}\label{S:Existence}

In the light of the previous section our Theorems~\ref{T:characterization_thm_intro} and~\ref{T:Existence_allocation_rule_intro}
can be formulated and proved as equivalent statements about allocation rules. We start with the result on non-existence of
non-randomized unbiased shifts, which is implicit in Theorem~\ref{T:characterization_thm_intro}.
\medskip

Suppose that statement~$(a)$ in Theorem~\ref{T:characterization_thm_intro} holds and for the Markov chain $X$ with $X_0=i$ there exists a
non-randomized unbiased shift $T$ such that $X_{T}$ has law $\nu$.
Then by Proposition~\ref{P:allocation_rule} there exists a translation-invariant allocation rule $\tau$ associated with~$T$ and by Proposition~\ref{P:transport_rule}
this rule balances the measures $L^i$ and $L^\nu$. Recall that $L^i$ is the measure on $\Z$ which has masses of fixed size $1/m_i$ at the times
when the stationary chain~$X$ visits state~$i$. By the balancing property~\eqref{E:balanced_transp_prop} for allocation rules, all masses of $L^\nu$ must have sizes which are integer
multiples of $1/m_i$. As these masses are $\nu_j/m_j$ we get that \smash{$\frac{m_i}{m_j} \nu_j$} must be integers for all $j\in\s$, which is statement~$(b)$.
\medskip

The remainder of this section is devoted to the proof of \emph{existence} of non-randomized unbiased shifts of the Markov chain $X$ with $X_0=i$,
embedding~$\nu$ under the assumption of
Theorem~\ref{T:characterization_thm_intro}\,$(b)$. By Example~\ref{Ex:characterization_thm_intro} we may additionally assume that for the initial state~$i$ of the Markov chain
we have $\nu_i=0$. Our claim is that the stopping time $T_*$ defined in Theorem~\ref{T:Existence_allocation_rule_intro} is an unbiased shift with the required properties. 
The next proposition shows
that an associated allocation rule balances the measures $L^i$ and $L^\nu$ which, once accomplished, implies Theorem~\ref{T:Existence_allocation_rule_intro}
and completes the proof of Theorem~\ref{T:characterization_thm_intro}.
\medskip

\begin{prop}\label{T:existence_allocation_rule_extended}
Under the assumptions set out above, the following holds.
\begin{itemize}
\item[$(a)$] The mapping~$\tau\colon \Omega \times \Z\to\Z$ defined by
$$\tau_\omega(k)=\min\big\{n\ge k \colon L_\omega^{i}([k,n]) \le L_\omega^{\nu}([k,n])\big\}$$
is a translation-invariant allocation rule associated with the $T_{\ast}$ defined in~\eqref{D:tau}.\\[1mm]

\item[$(b)$] For $\p$-almost every~$\omega$ and all $A \subset \Z$ we have
\begin{equation}\label{b}
\sum_{k \in \Z}\mathbbm{1} \{\tau_\omega(k) \in A \}\, L_\omega^{i}(k)=L^{\nu}_\omega(A),
\end{equation}
in other words the allocation rule balances $L^i$ and~$L^\nu$.
\end{itemize}
\end{prop}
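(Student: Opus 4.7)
The plan is to check (a) by verifying well-definedness, translation invariance, and the identification $\tau_\omega(0)=T_*$, and then to reduce (b) to a pointwise combinatorial identity proved by a level-counting argument. For (a), translation invariance follows from $L^i_{z\omega}([k+z,n+z])=L^i_\omega([k,n])$ and the analogous identity for $L^\nu$, so that $\tau_{z\omega}(k+z)=\tau_\omega(k)+z$; that $\tau_\omega(0)=T_*$ is the definition~\eqref{D:tau}; and $T_*$ is a stopping time because $\tau_\omega(0)$ depends only on $\omega_0,\omega_1,\ldots$. The substantive point is $\tau_\omega(k)<\infty$ $\p$-a.s., i.e., the integer-valued process $D^k_n:=m_i\bigl(L^\nu([k,n])-L^i([k,n])\bigr)$ attains a non-negative value at some finite $n\ge k$. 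Its increments $\Delta_n:=c_{X_n}\1\{X_n\in\supp\nu\}-\1\{X_n=i\}$ with $c_j:=(m_i/m_j)\nu_j\in\Z$ have stationary mean $\sum_j c_j\pi_j-\pi_i=0$, and the recurrence of $X$ together with a mass-transport argument in the spirit of~\cite{HP05} forces $D^k$ to cross zero almost surely.

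For (b), since $L^i(\{k\})$ vanishes unless $X_k=i$ and $\tau(k)=k$ whenever $X_k\ne i$, the balancing relation~\eqref{b} reduces to the pointwise identity
\[
\#\{k\in\Z\colon X_k=i,\ \tau_\omega(k)=n\}=m_i\,L^\nu_\omega(\{n\})\qquad\text{for every }n\in\Z.
\]
I would introduce the integer-valued potential $H_\omega$ determined by $H(n)-H(n-1)=\Delta_n$, so that for $X_k=i$ the event $\{\tau(k)=n\}$ translates to $H(m)\le H(k)$ for all $m\in[k,n-1]$ together with $H(n)>H(k)$. If $X_n\notin\supp\nu\cup\{i\}$ then $H(n)=H(n-1)$, and if $X_n=i$ then $H(n)<H(n-1)$; in both cases the constraints $H(n-1)\le H(k)<H(n)$ admit no solution, matching $L^\nu(\{n\})=0$. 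If $X_n=j\in\supp\nu$, then $H(n)=H(n-1)+c_j$, so the admissible values of $H(k)$ are the $c_j$ consecutive integers $\ell\in\{H(n-1),\ldots,H(n)-1\}$.

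For each such $\ell$, set $k(\ell)-1:=\max\{m\le n-1\colon H(m)>\ell\}$; then $H(k(\ell)-1)\ge\ell+1$ combined with $H(k(\ell))\le\ell$ forces $\Delta_{k(\ell)}=-1$, i.e., $X_{k(\ell)}=i$ and $H(k(\ell))=\ell$, while $H\le\ell$ throughout $[k(\ell),n-1]$ by construction, so $\tau(k(\ell))=n$. Conversely, any $k<n$ with $X_k=i$ and $\tau(k)=n$ must coincide with $k(H(k))$, yielding a bijection between admissible levels and contributing indices and hence exactly $c_j=m_i\nu_j/m_j=m_i L^\nu(\{n\})$ admissible values of $k$. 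The central obstacle, both in (a) and in this level-counting construction, is ensuring the maximum defining $k(\ell)$ is finite for every $\ell$, equivalently $\limsup_{m\to-\infty}H(m)=+\infty$ $\p$-a.s.; this is a recurrence-type fluctuation statement for the zero-drift process $H$ that I would derive from the ergodicity of $X$.
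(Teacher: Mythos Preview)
Your combinatorial core is essentially the paper's own argument. Your potential $H$ is (up to sign and the factor $m_i$) the paper's function $f$, and your level-counting bijection between admissible levels $\ell\in\{H(n-1),\dots,H(n)-1\}$ and indices $k$ with $\tau(k)=n$ is exactly the argument the paper gives in its Lemma~\ref{L:allocation_rule_lemma} (where the indices $k_1<\cdots<k_N$ are singled out by the conditions $f_{k_n}^z=(1-n)/m_i$). The translation-invariance check and the identification $\tau_\omega(0)=T_*$ are handled identically.

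Where you are thinner than the paper is precisely at the point you flag as the ``central obstacle'': you need \emph{both} that $\tau(k)<\infty$ for every $k$ with $X_k=i$ (your forward crossing) \emph{and} that the maxima defining $k(\ell)$ are finite (your backward claim $\limsup_{m\to-\infty}H(m)=+\infty$). You invoke a mass-transport argument for the first and ``ergodicity of $X$'' for the second, but the latter is not quite enough as stated: in the null-recurrent case $\p$ is only $\sigma$-finite, so one has to be careful about what ergodic theorem applies, and one must check that the increments of $H$ over return cycles to~$i$ are genuinely mean-zero under $\p_i$ (this uses $\E_iL^j([0,T_1))=1/m_i$ and $\nu_i=0$) before appealing to oscillation of mean-zero random walks. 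The paper bundles both statements into a single lemma (Lemma~\ref{prob}) and proves them \emph{together} by a neat trick: it first shows via point-stationarity that the two failure events cannot occur simultaneously, and then uses the mass-transport identity $\E\sum_k\1\{\tau(k)=0,X_k=i\}=\E\sum_k\1\{\tau(0)=k,X_0=i\}$ to conclude that each side equals~$m_i$, forcing both events to hold $\p$-almost surely. This avoids any direct fluctuation analysis of~$H$ and works uniformly for positive- and null-recurrent chains.
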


\medskip

The proof of the proposition is similar to that of \cite[Theorem 5.1]{LMT14} in the diffuse case.
We prepare it with two lemmas. The first lemma is a pathwise statement which holds
for every fixed trajectory~$\omega$ satisfying the stated assumption.

\begin{lemma}\label{L:allocation_rule_lemma}
Suppose $b \in \Z$ is such that $X_{b}=j$ for some $j \in \s$ with $\nu_j>0$, and $a\in\Z$ is given by
$$a:=\max\big\{k < b \colon  L^{i}([k,b]) \geq L^{\nu}([k,b])\}.$$
Then 
\begin{equation}\label{E:allocation_rule_lemma}
\sum_{k \in [a,b]}\mathbbm{1}\{\tau(k) \in A \} \, L^{i}(k)= L^{\nu}(A),
\end{equation}
holds for any $A \subset [a,b]$.
\end{lemma}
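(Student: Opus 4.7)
The plan is to rescale by $m_i$, introduce a discrete ``discrepancy'' path, and show that $\tau$ enacts a last-in-first-out pairing between the atoms of $L^i$ and those of $L^\nu$ inside $[a,b]$.

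By Theorem~\ref{T:characterization_thm_intro}(b), $n_j := (m_i/m_j)\nu_j \in \Z_{\geq 0}$ for every $j \in \mathcal{S}$, so $m_i L^i(\{k\}) = \mathbbm{1}\{X_k = i\}$ and $m_i L^\nu(\{k\}) = n_{X_k}$. Setting
$$S(n) := m_i\bigl(L^i([a,n]) - L^\nu([a,n])\bigr), \qquad n \geq a-1,$$
with $S(a-1) = 0$, one obtains an integer-valued path whose only jumps are $+1$ at positions with $X_k = i$ and $-n_{X_k}$ at positions with $\nu_{X_k} > 0$ (these two sets are disjoint because $\nu_i = 0$). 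I would then verify three facts: $X_a = i$ (otherwise $L^i([a,b]) = L^i([a+1,b])$ and $L^\nu([a+1,b]) \leq L^\nu([a,b])$, contradicting the maximality of $a$), $S(n) \geq 1$ for $n \in [a, b-1]$ (combine $S(b) \geq 0$ with the inequality $S(b) - S(n) \leq -1$, which follows from maximality of $a$ at $k = n+1$ together with integrality, and from $X_b \neq i$ when $n+1 = b$), and $S(b) = 0$ (apply the same inequality at $k = a+1$ together with $S(a) = 1$).

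For any $k \in [a,b]$ with $X_k = i$, the definition of $\tau$ becomes $\tau(k) = \min\{n \geq k : S(n) \leq S(k) - 1\}$. Since $S(k) \geq 1$ and $S(b) = 0$, this minimum is attained at some $y \in (k, b]$, and the strict drop $S(y) < S(y-1)$ forces $y$ to be an atom of $L^\nu$.

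The heart of the argument is a bijective count. Fix $y \in [a,b]$ with $\nu_{X_y} > 0$ and put $n(y) := n_{X_y} = S(y-1) - S(y)$. For each level $\ell \in \{S(y)+1,\ldots,S(y-1)\}$ set
$$k_\ell := \min\bigl\{n \in [a,y] : S(m) \geq \ell \text{ for all } m \in [n, y-1]\bigr\}.$$
Because $S(y-1) \geq \ell$ the set is non-empty, and minimality yields $S(k_\ell - 1) < \ell \leq S(k_\ell)$; since the only positive jumps of $S$ have size $+1$, this forces $X_{k_\ell} = i$ and $S(k_\ell) = \ell$. A direct check gives $\tau(k_\ell) = y$, for on $[k_\ell, y-1]$ one has $S \geq \ell = S(k_\ell)$ while $S(y) \leq \ell - 1 = S(k_\ell) - 1$. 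Conversely, any $k$ with $X_k = i$ and $\tau(k) = y$ satisfies $S(k) \in \{S(y)+1,\ldots,S(y-1)\}$ and $S(m) \geq S(k)$ on $[k, y-1]$; uniqueness at each level follows because a second candidate $k' > k$ with $X_{k'} = i$ and $S(k') = \ell$ would give $S(k'-1) = \ell - 1 < \ell$ with $k' - 1 \in [k, y-1]$, a contradiction. Thus $\#\{k \in [a,b] : X_k = i,\ \tau(k) = y\} = n(y) = m_i L^\nu(\{y\})$, and multiplying by $1/m_i$ and summing over $y \in A$ delivers~\eqref{E:allocation_rule_lemma}. The delicate point is this bijection, essentially a stack-discipline matching; careful minimality is needed to handle the endpoint $k = a$ (where $\tau(a) = b$) and the clustering of several $L^i$-atoms between consecutive $L^\nu$-atoms.
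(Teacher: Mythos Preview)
Your proof is correct and follows essentially the same route as the paper's: both introduce the discrepancy path $L^i - L^\nu$ (the paper writes it as $f_k^z$ anchored at the target site $z$, you write it as $S(n)$ anchored at $a$ and rescaled by $m_i$), verify that it vanishes at the endpoints of $[a,b]$ and stays strictly positive in between, and then identify the preimages $\tau^{-1}(\{y\})$ via a level-crossing/ladder-epoch argument that produces exactly $(m_i/m_j)\nu_j$ many $i$-sites for each $\nu$-site $y$. Your explicit formulation as a bijection indexed by levels $\ell\in\{S(y)+1,\dots,S(y-1)\}$ is just a relabelling of the paper's $k_1<\cdots<k_N$.
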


\begin{proof} We define the function $\Delta f\colon\Z\to[0,\infty)$ by
\[
\Delta f(k):=L^{i}(k)-L^{\nu}(k)=\left\{
\begin{array}{l l}
    \frac1{m_i} & \quad \text{if} \quad X_{k} = i, \\[2mm]
    -\sfrac{\nu_j}{m_{j}} & \quad \text{if} \quad X_{k} = j \not=i.
  \end{array} \right.
\]
Recall that by our assumption $\sfrac{\nu_j}{m_{j}}$ is an integer multiple of $\sfrac1{m_i}$. Hence, denoting
\[
f_{u}^{v}:=\sum_{n=u}^{v}\Delta f(n) \quad \mbox{ for all $u,v \in \Z$ and $u \le v$,}
\]
we have $a=\max\big\{k < b \colon  f_k^b = 0\}$ and hence $f_a^b=0$.

By the additivity of both sides of~\eqref{E:allocation_rule_lemma} it suffices to prove
\begin{equation}\label{auxi}
\sum_{k \in [a,b]}\mathbbm{1}\{\tau(k) =z \} \, L^{i}(k)= L^{\nu}(z) \quad \mbox{for all sites $z\in[a,b]$.}
\end{equation}
Fix $z\in[a,b]$ and let $j=X_z$. Observe that $\tau(k)=z$ if and only if $f_k^z\leq0$
but $f_k^\ell>0$ for all $k\leq\ell<z$. Hence we may assume $\nu_j>0$ as otherwise both sides
of~\eqref{auxi} are zero.  We also have that $f_a^z>0$ if $z<b$. Indeed, suppose that
\smash{$f_a^z\leq0$}.  Then \smash{$f_{z+1}^b=f_a^b-f_a^z\geq 0$}  contradicting the choice of~$a$.

As \smash{$f_a^z\geq0, f_z^z=-\frac{\nu_j}{m_j}<0$} and \smash{${\nu_j}/{m_j}$} is an integer multiple of
\smash{$1/{m_i}$} we find a $k_1\geq a$ with $f_{k_1}^z=0$ and $f_j^z<0$ for all $k_1<j\leq z$.
Similarly, we find $k_1< k_2<\cdots<k_N$  where \smash{$N:=(\sfrac{m_i}{m_j})\,\nu_j$} such that
$$f_{k_n}^z=\sfrac{1-n}{m_i} \mbox{  and $f_j^z<\sfrac{1-n}{m_i}$ for all $k_n<j\leq z$.}$$
As $\tau(k)=\min\{n\ge k \colon f_k^n\leq 0\}$ we infer that $\tau(k_n)=z$ for all $n\in\{1,\ldots, N\}$
and there are no other values $k$ with $\tau(k)=z$. Each of these values contributes a summand
\smash{$\sfrac1{m_i}$} to the left hand side in~\eqref{auxi}. Therefore this side equals \smash{$\sfrac{N}{m_i}=
\sfrac{\nu_j}{m_j}$}, as does the right hand side. This completes the proof.
\end{proof}

The second lemma is probabilistic and ensures in particular that the mapping $\tau$
described in Proposition~\ref{T:existence_allocation_rule_extended}\,$(a)$ is well defined.

\begin{lemma}\label{prob}
For $\p$-almost every $\omega$ the following two events hold
\begin{itemize}
\item[(\emph{E1})] for all $k$ with $X_{k}= i$ we have $\tau(k) < \infty$;\\[-2mm]
\item[(\emph{E2})] for all $b$ such that $X_{b}=j$ for some $j \in \s$ with $\nu_j>0$
there exists $a < b$  such that $X_{a}=i$ and $L^{i}([a, b])= L^{\nu}([a,b])$.
\end{itemize}
\end{lemma}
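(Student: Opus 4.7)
The plan is to reduce both events to the recurrence of a one-dimensional integer-valued random walk obtained by sampling, at successive visits of state $i$, the integer-valued process $S_n^{(a)} := m_i(L^{i}([a,n]) - L^{\nu}([a,n]))$, whose one-step increment $m_i\Delta f(n)$ equals $+1$ when $X_n = i$ and is a non-positive integer otherwise (with $\Delta f$ as in the proof of Lemma~\ref{L:allocation_rule_lemma}).

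For (E1), fix $k$ with $X_k = i$ and let $\xi_0 = k < \xi_1 < \xi_2 < \cdots$ be the successive visits of $i$ at or after $k$, all a.s.\ finite by recurrence of $X$ under $\p_i$. By the strong Markov property, the excursion increments $Y_l := S^{(k)}_{\xi_l} - S^{(k)}_{\xi_{l-1}}$ for $l \ge 1$ are i.i.d.; the classical cycle identity $\E_i[\#\{m \in (\xi_{l-1}, \xi_l) \colon X_m = j\}] = m_j/m_i$ for $j \ne i$, together with $\sum_j \nu_j = 1$ and $\nu_i = 0$, gives $\E_i[Y_l] = 0$. Because $n \mapsto S^{(k)}_n$ is non-increasing on every excursion interior $(\xi_{l-1}, \xi_l)$, its minimum on the $l$th excursion equals $S^{(k)}_{\xi_l} - 1$, so (E1) is equivalent to the assertion that $S^{(k)}_{\xi_l} \le 1$ for some $l \ge 1$. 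Either $Y_l \equiv 0$, in which case $S^{(k)}_{\xi_1} = 1$ trivially, or the walk is non-degenerate, mean-zero with finite first moment, and hence has $\liminf_l S^{(k)}_{\xi_l} = -\infty$ almost surely.

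For (E2), fix $b$ with $X_b = j \in \supp \nu$, so $j \ne i$, and let $\sigma_0 = b > \sigma_1 > \sigma_2 > \cdots$ be the backward visits of $i$ from $b$. The time-reversed chain, with kernel $p^*_{ij} = (m_j/m_i) p_{ji}$, is again irreducible and recurrent with invariant measure $m$, and the cycle identity applies to it. Consequently the process $W_l := S^{(\sigma_l)}_b$ has i.i.d.\ mean-zero increments for $l \ge 2$, so by the same dichotomy $W_l \ge 0$ for some $l \ge 1$. The largest such $\sigma_l$ is then the $a$ appearing in Lemma~\ref{L:allocation_rule_lemma}; maximality, together with the integer step size $1/m_i$ guaranteed by $\nu_j m_i/m_j \in \Z$, forces the equality $L^{i}([a,b]) = L^{\nu}([a,b])$.

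The main obstacle is the careful handling of the backward direction: one needs the analogous cycle identity for the reversed chain and a check that the initial, non-stationary increment $W_1 - W_0$ does not spoil the argument in the degenerate case. That check is short: $[\sigma_1, b]$ lies inside a single forward $i$-excursion, so $\sum_{m=\sigma_1+1}^{b} \nu_{X_m} m_i / m_{X_m} \le 1$ (with equality when the excursion interior $c$-sum is exactly $1$, as in the degenerate case), which together with $W_0 = -\nu_j m_i/m_j$ immediately gives $W_1 \ge 0$.
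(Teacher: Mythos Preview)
Your argument is essentially correct but follows a genuinely different route from the paper. The paper never invokes the random-walk recurrence of the excursion increments; instead it argues indirectly. First, using point-stationarity (Example~1.1) it shows that the complements of (\emph{E1}) and (\emph{E2}) cannot both have positive $\p$-measure: if $K$ were the minimal site with $\tau(K)=\infty$, then $\p_i(K=T_n)=\p_i(K=0)>0$ for every return time $T_n$, contradicting $\p_i(\Omega)=1$. Second, it applies the mass-transport identity (Lemma~\ref{L:mass_transport_pr}) to $m(x,y)=\E[\1\{\tau(x)=y,\,X_x=i\}]$ to conclude that (\emph{E1}) and (\emph{E2}) must in fact hold or fail together. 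Your approach trades this soft ergodic reasoning for the concrete fact that a mean-zero integer walk either is identically zero or has $\limsup=+\infty$ and $\liminf=-\infty$; this is more explicit and yields quantitative information, but it leans harder on the Markov structure (i.i.d.\ excursions) and so does not extend as readily to the non-Markovian setting discussed in Section~\ref{S:Conclusion}.

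Two small points deserve tightening. First, in your treatment of the degenerate case for (\emph{E2}) you use that every \emph{forward} $i$-excursion has $m_iL^\nu$-mass exactly~$1$, whereas the degeneracy you are assuming is that of the \emph{backward} increments $W_l-W_{l-1}$. These are indeed equivalent --- the $p^*$-excursion from $i$ is the time-reversal in law of the $p$-excursion from $i$, and the total $L^\nu$-mass is reversal-invariant --- but you should say so. Second, the sentence ``so $\sum_{m=\sigma_1+1}^{b}\nu_{X_m}m_i/m_{X_m}\le 1$'' reads as a general claim, which it is not; it holds only under the degenerate-case hypothesis, and the mention of $W_0$ in the same sentence is unnecessary since $W_1=1-\sum_{m=\sigma_1+1}^{b}\nu_{X_m}m_i/m_{X_m}$ directly.
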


\begin{proof}
To show this we use an argument from~\cite{HP05}, see Theorem 17 and the following remark. We formulate the negation
of the two events. The complement of (\emph{E1}) is the event that there exists~$k$ such that $X_{k}=i$ and
$L^{i}([k, \ell]) > L^{\nu}([k,\ell])$, for all $\ell>k$. The complement of (\emph{E2}) is that
there exists $b$ such that $X_b=j$ for some $j\in\s$ with $\nu_j>0$ and $L^{i}([a,b]) < L^{\nu}([a,b])$,
for all $a<b$ with $X_a=i$. We first show that, for $\p$-almost every~$\omega$, both complements cannot occur simultaneously.

Indeed, for a fixed $\omega$, it is clear that there cannot be $k$ and $b$ as above such that $k<b$. Assume for
contradiction that the set of trajectories $\omega$ for which there exist $k>b$ as above has positive probability.
On this event the minimum over all $k$ with $\tau(k)=\infty$ for all
$\ell>k$ is finite, we denote it by~$K$. By translation invariance $\p(K=0)>0$
from which we infer by conditioning on the event $\{X_0=i\}$ that $\p_i(K=0)>0$. If $(T_n\colon n\in\N)$
is the collection of return times to state~$i$, by the invariance described in Example~1.1 we have
$\p_i(K=T_n)=\p_i(K=0)>0$ for all $n\in\N$ contradicting the finiteness of~$\p_i$. Therefore we have shown that, for
$\p$-almost every~$\omega$,  either~(\emph{E1}) or~(\emph{E2}) occurs.

As the last step we show that event (\emph{E1}) cannot occur without event (\emph{E2}). To this end define
$m(x, y)= \E[\mathbbm{1}\{\tau(x)=y, X_x=i\}]$ and apply Lemma~\ref{L:mass_transport_pr} to get
$$\E\Big[\sum_{k\in \Z}\mathbbm{1}\{\tau(k)=0, X_k=i\}\Big]= \E\Big[\sum_{k\in \Z}\mathbbm{1}\{\tau(0)=k, X_0=i\}\Big].$$
%
The left-hand side in this equation equals~$m_i$ if and only if~(\emph{E2}) occurs
$\p$-almost every $\omega$, and the right-hand side equals~$m_i$ if and only if~(\emph{E1}) occurs
$\p$-almost every $\omega$. As these two events cannot fail at the same time, both events (\emph{E1}) and
(\emph{E2}) occur for $\p$-almost every $\omega$.
\end{proof}

\begin{proof}[Proof of Proposition~\ref{T:existence_allocation_rule_extended}]
Recall that $\tau$ is well-defined and note that translation-invariance of the allocation rule defined
in terms of~$\tau$ follows easily from the fact that $\tau_\omega(k)=\tau_{k\omega}(0)+k$. As $T_*(\omega)=
\tau_\omega(0)$ by definition, the allocation rule  is associated with $T_*$. This proves~$(a)$.

To prove~$(b)$ we note that it suffices to fix~$z\in\Z$ and show that for $\p$-almost every~$\omega$ equation~\eqref{b}
holds for $A=\{z\}$. We let $b=\tau(z)$. By Lemma~\ref{prob}  for $\p$-almost every~$\omega$ there exists $a<b$ such that
$X_{a}=i$ and $L^{i}([a, b])= L^{\nu}([a,b])$. Then the interval $[a,b]$ contains~$z$ and all $k$ with $\tau(k)=z$.
Hence the results follows by application of Lemma~\ref{L:allocation_rule_lemma}.
\end{proof}

\section{Moment properties of~$T_*$: Proof of Theorem~3}\label{S:Moment}

The critical exponent~$\frac12$ occurring in Theorem~\ref{T:moment_prop} originates from the behaviour of the first passage
time below zero by a mean zero random walk. We summarize the results required for such random walks
in the following lemma.

\begin{lemma}\label{firstpassage}
Let $\xi, \xi_1, \xi_2, \ldots$ be independent identically distributed random variables with $E \xi=0$
taking values in the integers. Define the associated random walk by $S_n=\sum_{i=1}^n \xi_i$ and its first
passage time below zero as $N=\min\{n\in\N \colon S_n\leq 0\}$.
\begin{itemize}
\item[$(a)$] If the walk is skip-free to the right, i.e.\ ${P}(\xi>1)=0$, then
$\displaystyle E\big[ N^{1/2}\big]=\infty.$
\item[$(b)$] If the walk has finite variance, then there exists $C>0$ such that
$\displaystyle P\big(N>n\big)\sim C\, \frac1{\sqrt{n}}.$
\end{itemize}
\end{lemma}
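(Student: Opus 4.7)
The plan is to exploit the ladder-structure machinery for integer random walks, in particular Spitzer's identity and the Wiener-Hopf factorisation, which give access to the law of $N$ from information about the sequences $\{P(S_n\le 0)\}$ and $\{P(S_n=1)\}$.

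For part~(b), I would apply Spitzer's identity in the form
\[-\log\bigl(1-E[z^N]\bigr)\;=\;\sum_{n\ge 1}\frac{z^n}{n}\,P(S_n\le 0),\qquad |z|<1.\]
Under mean zero and finite variance, the central limit theorem together with a Berry-Esseen bound gives $P(S_n\le 0)=\tfrac12+O(n^{-1/2})$, so the right-hand side equals $\tfrac12\log\tfrac{1}{1-z}$ plus a remainder that stays bounded as $z\uparrow 1$. This yields $1-E[z^N]\sim c\sqrt{1-z}$ with an explicit constant $c>0$, and Karamata's Tauberian theorem for monotone sequences then converts this singular behaviour into the claimed tail asymptotic $P(N>n)\sim C/\sqrt{n}$.

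For part~(a), I would combine the Wiener-Hopf factorisation
\[\bigl(1-E[z^N]\bigr)\bigl(1-E[z^{T_+}]\bigr)\;=\;1-z,\]
in which $T_+=\min\{n\ge 1:S_n\ge 1\}$, with Kemperman's cycle lemma. The skip-free to the right hypothesis forces $S_{T_+}=1$ deterministically, and Kemperman gives $P(T_+=n)=\tfrac{1}{n}P(S_n=1)$, so recurrence of the mean-zero one-dimensional walk yields $\sum_n P(S_n=1)/n=1$. An analysis of the singularity of $1-E[z^{T_+}]$ at $z=1$ via the local limit theorem for $P(S_n=1)$ --- in any stable-domain regime of index $\alpha\in(1,2]$ this gives $1-E[z^{T_+}]\asymp (1-z)^{1/\alpha}$ --- propagates through the factorisation to $1-E[z^N]\asymp (1-z)^{1-1/\alpha}$. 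Since $1-1/\alpha\le 1/2$ for $\alpha\in(1,2]$, a further application of Karamata forces $P(N>n)$ to decay no faster than $n^{-1/2}$, whence $E[N^{1/2}]=\infty$.

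The main obstacle is securing a lower bound $\beta\ge\tfrac12$ for the exponent in $1-E[z^{T_+}]\asymp(1-z)^\beta$ in full generality, without presupposing that $\xi$ lies in a stable domain of attraction. Here the universal constraint $E[T_+]=\sum P(S_n=1)=\infty$, combined with the monotonicity of the generating function $E[z^{T_+}]$ and a Karamata-type comparison, should rule out any exponent $\beta<\tfrac12$ and complete the argument.
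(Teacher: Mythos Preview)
For part~(b) your sketch is essentially the classical argument that the paper simply cites (Spitzer and Feller). One small slip: you invoke Berry--Esseen to obtain $P(S_n\le 0)=\tfrac12+O(n^{-1/2})$, but Berry--Esseen requires a third moment whereas the lemma only assumes finite variance. What is actually needed is the convergence of $\sum_n \frac1n\bigl(P(S_n\le 0)-\tfrac12\bigr)$, which under a bare second moment is Ros\'en's theorem rather than Berry--Esseen. This is a detail and does not affect the strategy.

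Part~(a) has a genuine gap. Your analysis of the singularity $1-E[z^{T_+}]\asymp(1-z)^{1/\alpha}$ presupposes that $\xi$ lies in the domain of attraction of a stable law of index~$\alpha\in(1,2]$, but a mean-zero skip-free walk need not lie in any stable domain (the negative tail may oscillate between different power laws, say). You recognise this and propose to close the gap using $E[T_+]=\sum_n P(S_n=1)=\infty$ together with ``monotonicity and a Karamata-type comparison'' to force $\beta\ge\tfrac12$. This does not work: a positive integer random variable with infinite mean can have $1-E[z^T]\asymp(1-z)^\beta$ for any $\beta\in(0,1)$, so $E[T_+]=\infty$ alone cannot pin down $\beta\ge\tfrac12$. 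A more promising route would be to feed in the universal anticoncentration bound $P(S_n=1)\le C/\sqrt{n}$ (which holds with no moment hypotheses) to control $P(T_+>n)$ from above, but even then the Tauberian direction needed to extract $E[N^{1/2}]=\infty$ from a lower bound on the generating function requires care.

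The paper's argument for~(a) is quite different and sidesteps all of this machinery. It exploits the skip-free hypothesis through a pathwise combinatorial identity: writing $N^{\ssup j}$ for the first passage time of the walk with increments $\xi_j,\xi_{j+1},\ldots$, one has
\[
\sum_{j=1}^n \1\{N^{\ssup j}\ge n-j+1\}=S_n-\min_{0\le k\le n}S_k,
\]
which upon taking expectations (the left side is $\sum_{j=1}^n P(N\ge j)=E[N\wedge n]$) gives $E[N\wedge n]\ge E[(S_n)_+]$. A concentration inequality valid for arbitrary i.i.d.\ sums then yields $E[(S_n)_+]\ge c\sqrt{n}$ for some $c>0$, with no moment or domain-of-attraction assumption. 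Finally, if $E[\sqrt{N}]$ were finite, dominated convergence (using $(N\wedge n)/\sqrt{n}\le\sqrt{N}$) would force $E[N\wedge n]/\sqrt{n}\to 0$, contradicting the lower bound. This direct route is both shorter and more robust than the Wiener--Hopf approach.
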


\begin{proof}
$(a)$ Denote by $N^{\ssup j}$ the first passage time for the walk given by $S^{\ssup j}_n=\sum_{i=1}^n \xi_{i+j-1}$. Then
$$E[N \wedge n ] =  \sum_{j=1}^n {P}(N\geq j) =
\sum_{j=1}^n {P}(N^{\ssup j} \geq n-j+1)
= E \Big[ \sum_{j=1}^n \1\{N^{\ssup j} \geq n-j+1\}\Big],$$
If $\underline{S}_n$ denotes the minimum of $\{S_0, S_1, \ldots, S_n\}$ we have, using that the walk is skip-free to the right,
$$\sum_{j=1}^n \1\{N^{\ssup j}\geq n-j+1\} = S_n - \underline{S}_n.$$
This implies $E[N \wedge n ] \geq E[(S_n)_+].$ By a concentration inequality for arbitrary sums of
independent random variables,
see \cite[Theorem~2.22]{P95},
there exists a constant $C>0$ such that, for all $\eps>0$ and~$n\in\N$, we have
${P}( S_n \in [-\eps \sqrt{n}, \eps \sqrt{n}]) \leq C\, \eps.$
Hence, by  Markov's inequality, for any $\eps>0$,
\begin{align*}
E \big[(S_n)_+\big] & = \frac12 E|S_n|
\geq \frac12 \,\eps\sqrt{n} \, {P}\big( |S_n| > \eps \sqrt{n} \big)
\geq \frac12\, \eps(1-C\eps)\, \sqrt{n}.
\end{align*}
We infer that \smash{$\liminf  \frac1{\sqrt{n}} \,E[N \wedge n ] >0$}.
But if we we had $E[ N^{1/2}]<\infty$ dominated convergence would imply
that this limit is zero, which is a contradiction.

$(b)$ This is a classical result of Spitzer~\cite{Spitzer60}. A good proof can be found in \cite[Theorem~1a in Section~XII.7]{Feller},
see also \cite[Section XVIII.5]{Feller} for a proof that random walks with finite variance satisfy Spitzer's condition.
\end{proof}

\subsection{Proof of Theorem~\ref{T:moment_prop}\,$(i)$.}
We start by proving a variant of the upper half in the Barlow-Yor inequality~\cite{BY81}
for Markov chains. This result, usually given in the context of continuous martingales,
estimates the moments of the local time at a stopping time, by moments
of the stopping time itself.%

\begin{lemma}\label{L:local_time_inequality}
For any $0<p<\infty$, there  exists a constant $C_{p}$ such that,
for any state~$i\in\s$ and any stopping time~$T$,
\begin{equation}\label{E:local_time_inequality}
\E_{i}[L^{i}([0,T])^{p}] \le C_{p}\, \E_{i}[a_{ii}(T)^{p}].
\end{equation}
\end{lemma}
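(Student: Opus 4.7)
The plan is to exploit the renewal structure of the visits of $X$ to state~$i$. Under $\P_i$ the successive visit times $0=\sigma_0<\sigma_1<\sigma_2<\cdots$ have, by the strong Markov property, i.i.d.\ increments $\tau_k=\sigma_k-\sigma_{k-1}$ distributed as the first return time to~$i$. I would reformulate the claim in terms of the visit count $K(n):=m_i L^{i}([0,n])=\#\{0\le k\le n\colon X_k=i\}$ and the deterministic function $\varphi(n):=m_i a_{ii}(n)=\E_i K(n)=\sum_{k=0}^{n}\P_i(X_k=i)$, so that~\eqref{E:local_time_inequality} becomes $\E_i K(T)^p\le C'_p\,\E_i\varphi(T)^p$ for a constant $C'_p$ depending only on~$p$.

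For a deterministic time $n$ and integer exponent $p$, expanding $K(n)^p=\big(\sum_k\mathbbm{1}\{X_k=i\}\big)^p$ as a symmetric sum and invoking the Markov property on ordered tuples yields
\[
\E_i K(n)^p \;\le\; p!\sum_{0\le k_1\le\cdots\le k_p\le n}\prod_{j=1}^{p}\P_i(X_{k_j-k_{j-1}}=i) \;\le\; p!\Big(\sum_{k=0}^n\P_i(X_k=i)\Big)^{p} \;=\; p!\,\varphi(n)^p,
\]
the last inequality following from the substitution $l_j:=k_j-k_{j-1}$ after dropping the constraint $\sum l_j\le n$. For a general stopping time $T$ I would establish the good-$\lambda$ inequality
\[
\P_i\big(K(T)>\beta\lambda,\,\varphi(T)\le\delta\lambda\big) \;\le\; \eta\,\P_i\big(K(T)>\lambda\big)
\]
for suitable $\beta>1$, $\delta>0$, $\eta<1$. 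Conditioning on $\mathcal{F}_{\sigma_{\lfloor\lambda\rfloor}}$: on $\{K(T)>\lambda\}$ we have $\sigma_{\lfloor\lambda\rfloor}\le T$, so the post-$\sigma_{\lfloor\lambda\rfloor}$ chain is an independent copy of $X$ under~$\P_i$ by the strong Markov property; on the further event $\{K(T)>\beta\lambda,\,\varphi(T)\le\delta\lambda\}$ this copy must accumulate more than $(\beta-1)\lambda$ additional visits to~$i$ within a residual time bounded by~$\varphi^{-1}(\delta\lambda)$. The deterministic-time bound together with Markov's inequality yields a conditional estimate of order $q!(\delta/(\beta-1))^q$ for any integer $q$, which is made smaller than any prescribed~$\eta$ by choosing~$\delta$ small (in terms of~$p$); the standard layer-cake argument then converts the good-$\lambda$ bound into the required $L^p$ inequality for every $p>0$, after absorbing a factor $\E_i K(T)^p$ (initially via a truncation $T\wedge N$).

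The main obstacle is the coupling between~$T$ and the visits of~$X$ to state~$i$: for random $T$ the clean combinatorial identity of the deterministic step breaks because $\{T\ge n_p\}$ interacts with the visit events $\{X_{n_j}=i\}$. The renewal and strong Markov structure at the return times~$\sigma_k$ is what decouples the future of the chain from the past and thereby powers the argument; the delicate point in the good-$\lambda$ estimate is that the residual rule $T-\sigma_{\lfloor\lambda\rfloor}$ is only a stopping time of the shifted chain \emph{conditionally} on~$\mathcal{F}_{\sigma_{\lfloor\lambda\rfloor}}$, with a rule that may depend on that conditioning, and one must verify that the deterministic-time estimate applied to the fresh copy of $X$ still gives a bound that is uniform in the realization of~$\mathcal{F}_{\sigma_{\lfloor\lambda\rfloor}}$.
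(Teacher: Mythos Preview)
Your proposal is correct and follows essentially the same route as the paper: both arguments verify a good-$\lambda$ inequality for the pair $\big(m_iL^i([0,T]),\,m_ia_{ii}(T)\big)$ by applying the strong Markov property at the $\lfloor\lambda\rfloor$th visit time to state~$i$ and then bounding the probability that a fresh copy of the chain accumulates many visits in a short window. The paper's version is marginally leaner: it uses only the first-moment Markov inequality (your $q=1$ case), so the preliminary combinatorial bound $\E_iK(n)^p\le p!\,\varphi(n)^p$ for deterministic~$n$ is not needed, and the ``delicate point'' you flag about the residual rule is handled implicitly by noting that $\{T\ge \sigma_{\lfloor\lambda\rfloor+1}\}\in\mathcal{F}_{\sigma_{\lfloor\lambda\rfloor+1}}$ together with the deterministic bound $T\le \varphi^{-1}(\delta\lambda)$ on the event $\{\varphi(T)\le\delta\lambda\}$.
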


The lemma relies on the following classical inequality, we refer to~\cite[(6.9)]{Bass95} for a proof.

\begin{lemma}[\textbf{Good $\lambda$ inequality}]\label{L:good_lambda}
For every  $0<p<1$ there is a constant $C_{p}>0$ such that, for any pair of non-negative
random variables $(X,Y)$ satisfying 
\begin{equation}\label{E:good_lambda_ineq}
P(X > 3 \lambda, Y <\delta \lambda) \le \delta\,P(X> \lambda)
\quad \mbox{for all $0<\delta<3^{-p-1}$ and $\lambda > 0$,}
\end{equation}
we have
$$E\left[ X^{p}\right] \le C_{p} \, E\left[Y^{p}\right].$$
\end{lemma}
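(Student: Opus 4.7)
The plan is to apply the classical \emph{layer cake} method to exploit the tail comparison in the hypothesis. My starting point is the distribution-function identity
$$E[X^{p}] = \int_{0}^{\infty} p\lambda^{p-1} P(X > \lambda)\, d\lambda,$$
which under the substitution $\lambda \mapsto 3\lambda$ rewrites as $E[X^{p}] = 3^{p} \int_{0}^{\infty} p\lambda^{p-1} P(X > 3\lambda)\, d\lambda$. I would then decompose
$$P(X > 3\lambda) \le P(X > 3\lambda,\, Y < \delta\lambda) + P(Y \ge \delta\lambda),$$
bound the first term by the hypothesis~\eqref{E:good_lambda_ineq}, and compute the second via the substitution $u = \delta\lambda$, arriving at
$$E[X^{p}] \le 3^{p}\delta\, E[X^{p}] + 3^{p}\delta^{-p}\, E[Y^{p}].$$

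Since the condition $\delta < 3^{-p-1}$ forces $3^{p}\delta < 1/3$, the absorbing coefficient is strictly less than one, and rearranging formally delivers $E[X^{p}] \le C_{p}\, E[Y^{p}]$ with $C_{p} = 3^{p}\delta^{-p}/(1 - 3^{p}\delta)$. For concreteness one could fix $\delta = 3^{-p-2}$ and obtain an explicit constant depending only on $p$.

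The main obstacle is the familiar one for such absorption arguments: the rearrangement in the previous step is only legitimate when $E[X^{p}]$ is a priori finite, which the statement does not assume. To circumvent this, I would apply the whole argument to the truncated variable $X_{M} := X \wedge M$. The hypothesis continues to hold with $X$ replaced by $X_{M}$, since $\{X_{M} > \lambda\} = \{X > \lambda\}$ for $\lambda < M$ while $\{X_{M} > 3\lambda\} = \emptyset$ for $\lambda \ge M/3$, so the defining inequality for $X$ implies the corresponding one for $X_{M}$ at every $\lambda > 0$. As $E[X_{M}^{p}] \le M^{p} < \infty$, the absorption is valid for each fixed $M$ and produces $E[X_{M}^{p}] \le C_{p}\, E[Y^{p}]$ with $C_{p}$ independent of~$M$. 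Letting $M \to \infty$ by monotone convergence then yields the claim.
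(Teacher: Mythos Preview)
Your argument is correct and is precisely the standard proof of the good-$\lambda$ inequality: layer-cake representation, the decomposition $P(X>3\lambda)\le P(X>3\lambda,\,Y<\delta\lambda)+P(Y\ge\delta\lambda)$, absorption of the $E[X^p]$ term using $3^p\delta<1$, and a truncation $X\wedge M$ to justify the absorption when $E[X^p]$ is not known to be finite. Your verification that the hypothesis~\eqref{E:good_lambda_ineq} is inherited by $X_M$ is also correct.

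The paper itself does not give a proof of this lemma; it simply cites Bass~\cite[(6.9)]{Bass95}. What you have written is essentially the argument one finds there, so there is no methodological difference to discuss. If anything, your write-up is slightly more careful than many textbook versions in making the truncation step explicit.
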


\begin{proof}[Proof of Lemma~\ref{L:local_time_inequality}] If we show that \eqref{E:good_lambda_ineq} holds with random
variables $X=m_i L^{i}([0,T])$ and $Y=m_i a_{ii}(T)$ under $\p_i$,  the result follows immediately from Lemma~\ref{L:good_lambda}.
If $\lambda\leq1$ the left hand side of~\eqref{E:good_lambda_ineq} is zero and there is nothing to show. We may therefore assume
that $\lambda>1$. Define $m_ia_{ii}^{-1}(x):=\max\{ n \colon m_ia_{ii}(n)<x\}$. Let $T_0=0$ and $T_k$ be the time of the $k$th
visit of state~$i$ after time zero. Finally assume, without loss of generality, that $\p_{i}(X>\lambda)>0$. Then,
\begin{align*}
\p_{i}\big( X > 3\lambda, Y < \delta \lambda \,\big|\, X > \lambda\big) &=
\p_{i}\big( T_{\lfloor 3\lambda \rfloor +1} \leq T , m_ia_{ii}(T) < \delta \lambda \,\big|\, T_{\lfloor\lambda\rfloor+1} \leq T \big)  \\
& \le \p_{i}\big(T_{\lfloor 3\lambda\rfloor - \lfloor\lambda\rfloor} \leq m_ia_{ii}^{-1}(\delta \lambda)\big)
\leq  \p_{i}\big(L^{i}([0,m_ia_{ii}^{-1}(\delta \lambda)]) \ge \lfloor2\lambda\rfloor\big).
\end{align*}
By Markov's inequality the last expression above can be bounded by
$$\lfloor2\lambda\rfloor^{-1}\,\E_{i}\left[L^{i}([0, m_ia_{ii}^{-1}(\delta \lambda)])\right]
= \lfloor2\lambda\rfloor^{-1}\,m_ia_{ii}\left(m_ia_{ii}^{-1}(\delta \lambda)\right) \leq \delta \frac{\lambda}{\lfloor 2\lambda\rfloor} ,$$
which is smaller than $\delta$, as required.
\end{proof}

We define $T_0=0$ and $T_k=\min\{ n>T_{k-1} \colon X_n=i\}$, for $k\geq 1$. Recall that
\smash{$\E_i L^j([T_{k-1},T_k))=1/m_i$} and hence, by the strong Markov property, the random variables
\smash{$\xi_k:=1-m_i \,L^\nu([T_{k-1},T_k))$}
are independent and identically distributed with mean zero. By Lemma~\ref{firstpassage}\,$(a)$
the first passage time of zero for this walk satisfies \smash{$\E_i[N^{1/2}]=\infty$}.
As \smash{$m_iL^{i}([0,T_*])\geq N-1$} the result follows.

\subsection{Proof of Theorem~\ref{T:moment_prop}\,$(ii)$.} We first prove the result in the simple case that the state space~$\s$
is finite. In this case the chain is positive recurrent and we have $a(n)\sim n$.%

\begin{lemma}\label{positive}
Suppose $\s$ is finite. Then, for any $i\in\s$, we have $\displaystyle\E_{i}\big[T_*^{\beta}\big] < \infty$, for all $0\leq \beta < \frac12$.%
\end{lemma}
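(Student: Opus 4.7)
The plan is to use the first-passage time $N$ already identified in the proof of part~(i) and, exploiting the strong concentration available in the positive recurrent setting, to upgrade its tail bound into a bound for the tail of~$T_*$ itself. Writing $\tau_k := T_k - T_{k-1}$ for the successive excursion lengths from state~$i$, I would first record the pathwise relation
$$m_i\bigl(L^i([0,T_k-1])-L^\nu([0,T_k-1])\bigr) = S_k \quad \text{for every } k\ge 1,$$
obtained by noting that in $[T_{k-1},T_k-1]$ the process $L^i$ is constant equal to $k/m_i$ while $L^\nu$ has increased by $(1-\xi_j)/m_i$ across each completed excursion $[T_{j-1},T_j)$, $j\le k$; here $S_k=\sum_{j=1}^k \xi_j$. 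In particular, $T_* \le T_N - 1 < T_N$ for $N=\min\{k\ge 1 \colon S_k\le 0\}$. Because $\s$ is finite, the chain is positive recurrent, the $\tau_k$ are i.i.d.\ with $\mu:=\E_i\tau_1\in(0,\infty)$ and finite $\Var(\tau_1)$ (indeed all moments), and the $\xi_k$ are i.i.d.\ mean zero with finite variance.

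By Lemma~\ref{firstpassage}\,(b) there is a constant $C>0$ such that $\p_i(N>k)\le C k^{-1/2}$ for all $k\ge 1$. To translate this into a bound for $T_*$, I would fix $n$ large, set $K_n:=\lfloor n/(2\mu)\rfloor$, and split
$$\p_i(T_*>n)\;\le\;\p_i(T_N>n)\;\le\;\p_i(N>K_n)\;+\;\p_i(T_{K_n}>n),$$
using the pathwise monotonicity $T_N\le T_{K_n}$ on $\{N\le K_n\}$. The first term is $O(n^{-1/2})$ by the preceding display, and Chebyshev applied to the i.i.d.\ sum $T_{K_n}$ (whose mean is $\mu K_n\le n/2$) controls the second by $\Var(\tau_1)K_n/(n-\mu K_n)^2 = O(n^{-1})$. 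Altogether, $\p_i(T_*>n)=O(n^{-1/2})$.

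The remaining step is routine: $\E_i[T_*^\beta]=\beta\int_0^\infty t^{\beta-1}\p_i(T_*>t)\,dt$ is finite precisely when $\int^\infty t^{\beta-3/2}\,dt<\infty$, i.e.\ when $\beta<\tfrac12$. I expect the main obstacle to lie in the first step—verifying the pathwise identity carefully and showing that the condition defining $T_*$ really occurs for the first time in the excursion with index $N$. This bookkeeping relies on the integrality hypothesis $(m_i/m_j)\nu_j\in\Z$ from Theorem~\ref{T:characterization_thm_intro}\,(b), which guarantees that $\xi_k$ is integer-valued and hence that $L^i-L^\nu$ can only cross zero at the right endpoints of excursions; once this is in place the concentration argument is elementary and uses the finiteness of $\s$ only through $\Var(\tau_1)<\infty$.
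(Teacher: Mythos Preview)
Your proposal is correct and follows essentially the same route as the paper: bound $T_*\le T_N$, apply Lemma~\ref{firstpassage}\,(b) to control $\p_i(N>cn)$, and use Chebyshev on the i.i.d.\ excursion lengths to control $\p_i(T_{\lfloor cn\rfloor}>n)$, yielding the $O(n^{-1/2})$ tail. Your choice $c=1/(2\mu)$ is just a concrete instance of the paper's ``sufficiently small~$\eps$''. One minor remark: your worry in the last paragraph is unfounded---the inequality $T_*\le T_N-1$ follows directly from the monotonicity of $n\mapsto L^i([0,n])-L^\nu([0,n])$ on each $[T_{k-1},T_k-1]$ together with your displayed identity, and requires neither the integrality hypothesis nor that the first zero-crossing occur at an excursion endpoint.
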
%

\begin{proof}
Let $T_0=0$ and,  for $k\in\N$, define $T_k=\min\{ n>T_{k-1} \colon X_n=i\}$.
Denote by $h_{ij}$ the probability that the chain started in~$i$ hits~$j$ before
returning to~$i$, and observe that irreducibility implies that $h_{ij}>0$. By the strong Markov property we
have $m_jL^j[0,T_1]=YZ$ where $Y$ is a Bernoulli variable with mean $h_{ij}$ and $Z$ is an independent geometric
with success parameter~$h_{ji}$. 
Hence $\E_i L^j[0,T_1)=h_{ij}/m_jh_{ji}$, which also equals $1/m_i$.
Recalling that \smash{$\E[Z^2]\leq2/h_{ji}^2$} we get \smash{$\E_i[L^j[0,T_1)^2]\leq 2h_{ij}/m_j^2h_{ji}^2$}, and hence
$L^\nu[0,T_1)$ has finite variance. Define
\smash{$\xi_k:=1-m_i\,L^\nu([T_{k-1},T_k)),$}
and observe that $\xi_1, \xi_2,\ldots$ are independent and identically distributed variables with mean zero and
finite variance. Let \smash{$N:=\min\{ n \colon \sum_{k=1}^n \xi_k\leq 0\}$},
and observe that \smash{$T_*\leq T_N$}. Fix $\eps>0$ and note that
\begin{align*}
\prob_i\big( T_*>n\big)
& \leq \prob_i\big( N>\eps n \big) + \prob_i\Big( \sum_{k=1}^{\lceil \eps n\rceil} (T_k-T_{k-1})>n\Big).
\end{align*}
By Lemma~\ref{firstpassage}\,$(b)$
the first term on the right-hand side is bounded by a constant multiple of $(\eps n)^{-1/2}$.
For the second term we note that the random variables $T_1-T_0, T_2-T_1,\ldots$ are independent and
identically distributed with finite variance. By Chebyshev's inequality we infer that, for sufficiently small
$\eps>0$, the term is bounded by a multiple of $1/n$. Altogether we get that
\smash{$\prob_i( T_*>n)$} is bounded by a constant multiple of \smash{$n^{-1/2}$}, from which the result follows immediately.
\end{proof}

We return to the general case. The next result, which is an auxiliary step in the proof
of Theorem~\ref{T:moment_prop}\,$(ii)$, may be of independent interest. The short proof given here,
which does  not make any regularity assumptions on the chain, is due to Vitali Wachtel.

\begin{lemma}\label{hitti}
Fix a state $i\in\skris$ and let $T=\min\{ n>0 \colon X_n=i\}$ be the first return time to this state.
Then $$\E_{i}\big[a_{ii}(T)^{\alpha}\big] < \infty, \quad \mbox{ for all $0\leq \alpha < 1$.}$$
\end{lemma}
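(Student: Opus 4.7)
The plan is to reduce the claim to a tail bound on the random variable
$U(T) := m_i\, a_{ii}(T) = \sum_{k=0}^T p_{ii}^{(k)}$, where $p_{ii}^{(k)} := \p_i(X_k = i)$. I would write $H(n) := \p_i(T > n)$ for the tail of the return-time distribution and record that $U$ is non-decreasing with increments $p_{ii}^{(k)} \in [0,1]$.

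The crux is the renewal inequality
\[
U(n)\, H(n) \le 1 \qquad \text{for every } n \ge 0.
\]
To obtain this I would apply the strong Markov property at the first return to decompose $p_{ii}^{(k)} = \sum_{j=1}^k \p_i(T = j)\, p_{ii}^{(k-j)}$ for $k \ge 1$ and then sum over $0 \le k \le n$ to get the renewal identity $U(n) = 1 + \sum_{j=1}^n \p_i(T = j)\, U(n - j)$. Since $U$ is non-decreasing, $U(n-j) \le U(n)$, so $U(n) \le 1 + (1 - H(n))\, U(n)$, which rearranges to $U(n)\, H(n) \le 1$.

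Next I would translate this into a tail for $U(T)$. Setting $n_x := \min\{n \ge 0 : U(n) > x\}$, one has $\{U(T) > x\} = \{T \ge n_x\}$ and hence $\p_i(U(T) > x) = H(n_x - 1)$. Because the increments of $U$ are bounded by $1$, we have $U(n_x - 1) \ge U(n_x) - 1 > x - 1$ for $x > 1$, and combining with the renewal inequality yields $\p_i(U(T) > x) \le 1/(x-1)$ on that range.

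Finally, the layer-cake formula gives
\[
\E_i[U(T)^\alpha] = \alpha \int_0^\infty x^{\alpha - 1}\, \p_i(U(T) > x)\, dx \le \alpha \int_0^2 x^{\alpha - 1}\, dx + 2\alpha \int_2^\infty x^{\alpha - 2}\, dx,
\]
where on $[2,\infty)$ I use $1/(x-1) \le 2/x$; both integrals are finite for $\alpha \in [0,1)$. Dividing by $m_i^\alpha$ yields the required bound on $\E_i[a_{ii}(T)^\alpha]$. There is no substantial obstacle beyond spotting the renewal inequality $U H \le 1$; once it is observed, the rest is routine integration, which matches the ``short proof'' attribution and explains the absence of any regular variation assumption on the chain.
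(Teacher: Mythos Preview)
Your proof is correct, and it rests on the same key fact as the paper's: the renewal inequality $m_i a_{ii}(n)\,\p_i(T>n)=O(1)$. The paper obtains this (in the form $m_i a_{ii}(n)\le 2/\p_i(T>n-1)$) by invoking a two-sided estimate of Erickson relating the renewal function to the integrated tail $m(n)=\int_0^n \p_i(T>x)\,dx$, combined with $m(n)\ge n\,\p_i(T>n-1)$; you instead derive the sharper one-sided bound $U(n)H(n)\le 1$ directly from the discrete renewal equation, which is more self-contained. For the final step the paper writes $\E_i[a_{ii}(T)^\alpha]$ as the sum $\sum_n (1-s_{n-1})^{-\alpha}(s_n-s_{n-1})$ and bounds it by the convergent integral $\int_0^1 (1-s)^{-\alpha}\,ds$, whereas you pass through the tail of $U(T)$ and the layer-cake formula. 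These are minor presentational differences on the same underlying idea; your route avoids the external reference and is arguably cleaner.
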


\begin{proof}
By Lemma~1 in Erickson~\cite{E73}, we have for $m(n):=\int_0^n \prob_{i}(T>x)\, dx$ that
$$\frac{n}{m(n)}\leq m_ia_{ii}(n) \leq 2 \, \frac{n}{m(n)} \qquad \mbox{ for all positive integers } n.$$
As $m(n)\geq n\prob_{i}(T>n-1)$ we infer that $m_ia_{ii}(n)\leq 2/\prob_{i}(T>n-1)$ and therefore
$$\E_{i}\big[a_{ii}(T)^\alpha\big] \leq \big(\sfrac{2}{m_i}\big)^\alpha\sum_{n=1}^\infty \big(\prob_{i}(T>n-1) \big)^{-\alpha}\, \prob_{i}(T=n)
= \big(\sfrac{2}{m_i}\big)^\alpha \sum_{n=1}^\infty (1-s_{n-1})^{-\alpha} \big(s_n-s_{n-1}\big),$$
where $s_n:=\prob_{i}(T\leq n)$. Letting $s(t):=s_{n-1}+(t-(n-1))(s_n-s_{n-1})$, for $n-1\leq t<n$, we
can bound the sum by $\int_0^\infty (1-s(t))^{-\alpha} \, ds(t)$, which is finite for all $0\leq\alpha<1$,
as required.
\end{proof}

We now look at the reduction of our Markov chain to the finite state space $\skris'=\{0\}\cup\{j\in\skris \colon \nu_j>0\}$. More explicitly, let
$t_0=0$ and $t_k=\min\{n>t_{k-1} \colon X_n\in\skris'\}$ for $k\in\N$, and $t_k=\max\{n<t_{k+1} \colon X_n\in\skris'\}$ for $k\in-\N$. Then $Y_n=X_{t_n}$
defines an irreducible Markov chain $Y=(Y_n \colon n\in\Z)$ with finite state space $\skris'$, and its invariant measure is $(m_i \colon i\in \skris')$.
If $N$ is the stopping time constructed in Theorem~2 for the reduced chain $Y$, then the solution~$T_*$ for the original
problem  is~\smash{$T_*=t_{N}.$}
\smallskip

Given two states $i,j\in \s'$ we denote by $S_{ij}$ a random variable whose law is given by
${\mathsf P}(S_{ij}= s)=\prob_i(t_1=s \, | \, Y_1=j)$ for all $s\in\N$, if $\prob_i(Y_1=j)>0$, and $S_{ij}=0$ otherwise.
We construct a probability space 
on which there are independent families $(S_{ij}, S_{ij}^{\ssup k} \colon k\in\N)$ of independent random variables with this law,
together with an independent copy of~$Y$ and hence~$N$.  We denote probability and expectation on this space by ${\mathsf P}$, resp.~${\mathsf E}$.
Observe that on this space we can also define a copy of the process $(t_k \colon k\in\N)$ by $t_0=0$ and $$t_k=t_{k-1}+\sum_{i,j\in\s'}
S_{ij}^{\ssup k}\1\{Y_{k-1}=i, Y_k=j\}\qquad \mbox{ for }k\in\N.$$
For any non-decreasing, subadditive representative $a$ of the class of the asymptotic Green's function,
$$\E_{i}\big[a(T_*)^{\beta}\big]
= {\sf E} \Big[ a\Big( \sum_{k=1}^{N} t_k-t_{k-1}\Big)^{\beta}\Big]
\leq {\sf E} \Big[ a\Big(  \sum_{k=1}^{N} \sum_{i,j\in\s'} S_{ij}^{\ssup k}\Big)^{\beta}\Big]
\leq \sum_{i,j\in\s'}  {\sf E}\Big[  a\Big( \sum_{k=1}^{N}  S_{ij}^{\ssup k}\Big)^{\beta}\Big].$$
It therefore suffices to show that
$${\sf E}\Big[  a_{ii}\Big( \sum_{k=1}^{N}  S_{ij}^{\ssup k}\Big)^{\beta}\Big]<\infty.$$
Let $n\in\N$ and use first subadditivity of $a_{ii}$ and then Jensen's inequality to get, for $2\beta<\alpha<1$, that
$${\sf E}\Big[  a_{ii}\Big( \sum_{k=1}^{n}  S_{ij}^{\ssup k}\Big)^{\beta}\Big]
\leq {\sf E}\Big[  \Big( \sum_{k=1}^{n}a_{ii}^\alpha\big(S_{ij}^{\ssup k}\big) \Big)^{\beta/\alpha}\Big]
\leq \Big(  \sum_{k=1}^{n} {\sf E} \big[a_{ii}^\alpha\big(S_{ij}^{\ssup k}\big)\big] \Big)^{\beta/\alpha}
= n^{\beta/\alpha} {\sf E} \big[a_{ii}^\alpha\big(S_{ij}\big)\big]^{\beta/\alpha}  .$$
We now note that, if $T_{ij}$ denotes the first hitting time of state~$j$ for $X$ under $\prob_i$, we have
${\mathsf P}(S_{ij}>x)\leq C_0\, \prob_i(T_{ij}>x)$ for all $x>0$, where $C_0$ is the maximum of the inverse of all
nonzero transition probabilities from $i$ to all other states, by the chain~$Y$. Hence
$${\sf E}\big[a_{ii}^\alpha\big(S_{ij}\big)\big] \leq C_0\, {\E_i}\big[a_{ii}^\alpha\big(T_{ij}\big)\big].$$
In the case $i=j$ the right hand side is finite by Lemma~\ref{hitti} and, as $a_{ii}$ grows no faster than linearly,
the right hand side is finite for all choices of $i,j\in\s'$ by application of Theorem~1.1 in Aurzada et al.~\cite{OM11}.
Summarising, we have found a constant $C>0$ such that
$${\sf E}\Big[  a_{ii}\Big( \sum_{k=1}^{n}  S_{ij}^{\ssup k}\Big)^{\beta}\Big]\leq C n^{\beta/\alpha}.$$
Using the independence of $N$ and $(S_{ij}^{\ssup k} \colon k\in\N)$ and Lemma~\ref{positive} we get
$${\sf E}\Big[  a_{ii}\Big( \sum_{k=1}^{N}  S_{ij}^{\ssup k}\Big)^{\beta}\Big]
\leq C \E_i\big[N^{\beta/\alpha}\big]<\infty,$$
as required.

\section{Optimality of $T_{*}$: Proof of Theorem~4}\label{S:Optimality}

In this section we prove Theorem~\ref{T:Optimality}. We start by introducing an intuitive and convenient way to talk about allocation rules. A path of the Markov chain $X$ can be viewed as leaving white and couloured balls on the integers, in the following way: At each site $k \in \Z$ we place one white ball if $X_{k}=i$, and \smash{$\frac{m_{i}}{m_{j}}\nu_{j}$} balls of colour~$j$ if $X_{k}=j$. By our assumption there is always an integer number of balls at each site. We call a bijection from the set of white balls to the set of coloured balls a \emph{matching}. Given a matching  we define an allocation rule $\tau\colon \Omega \times\Z\to\Z$ by letting
\begin{itemize}
\item$\tau(k)=k$ if there is no white ball at site~$k$,
\item $\tau(k)=\ell$ if the white ball at site $k$ is matched to a coloured ball at site~$\ell$.
\end{itemize}
Every allocation rule thus constructed balances $L^\mu$ and $L^\nu$, for $\mu=\delta_i$.
Conversely, every balancing
allocation rule agrees $L^\mu$-almost everywhere with an allocation rule constructed from a matching.
We denote by $\tau_*\colon \Omega \times\Z\to\Z$ the allocation rule associated with $T_*$
constructed in Proposition~\ref{T:existence_allocation_rule_extended}.
\medskip%

The allocation rule $\tau_*$ is associated with the following one-sided \emph{stable matching} or \emph{greedy algorithm},
which is a variant of the famous Gale--Shapley stable marriage algorithm~\cite{GS62}.
\begin{itemize}
\item[(1)] If the next occupied site to the right of a white ball carries one or more  coloured balls, map the
white ball to one of those coloured balls.
\item[(2)] Remove all white and coloured balls used in step (1) and repeat.
\end{itemize}
By Lemma~\ref{prob} the algorithm matches every ball after a finite number of steps, and it is easy to see that this
leads to the allocation rule~$\tau_*$.
\smallskip

Now recall from Section~2 that non-negative, possibly randomized, times~$T$ are associated to 
transport rules $\theta\colon \Omega\times\Z\times\Z\to[0,1]$ balancing $L^\mu$ and $L^\nu$ with 
the property that $\theta_\omega(x,y)=0$ whenever $x>y$. Without loss of generality 
we may assume that $\theta_\omega(x,x)=1$ if the site $x$ does not carry a white ball. This implies
that, for $x<y$, we can have $\theta_\omega(x,y)>0$ only if the site $x$ carries a white ball, 
and  the site $y$ carries a coloured ball. Moreover, if $y$ carries a ball of colour $j$, we have
$$\sum_{x<y} \theta_\omega(x,y) = \frac{m_i}{m_j}\nu_j.$$
Suppose that $u,v\in\Z$ with $u<v$. We say that the pair $(u,v)$ is \emph{crossed} by $\theta$ if there exist sites $x<u<v<y$ 
such that  $\theta(x,v)>0$ and $\theta(u,y)>0$. In this case $(x,u,v,y)$ is called a \emph{crossing}. 
\pagebreak[3]

For a transport rule $\theta$ we repair the crossing~$(x,u,v,y)$ by letting
\begin{itemize}
\item $\theta'(x,y)=\theta(x,y)+ (\theta(x,v) \wedge \theta(u,y))$,
\item $\theta'(u,v)=\theta(u,v)+ (\theta(x,v) \wedge \theta(u,y))$,
\item $\theta'(x,v)=\theta(x,v)- (\theta(x,v) \wedge \theta(u,y))$,
\item $\theta'(u,y)=\theta(u,y)- (\theta(x,v) \wedge \theta(u,y))$,
\end{itemize}
and setting $\theta'(w,z)=\theta(w,z)$ if $w\not\in\{x,u\}$ or $z\not\in\{y,v\}$, see Figure~\ref{f1}.  Note that
$\theta'$ is still a transport rule, the crossing has been repaired, i.e.\ $(x,u,v,y)$ 
is not a crossing by $\theta'$, and if $\theta$ balances $L^\mu$ and~$L^\nu$ then so does $\theta'$.

We now explain how to repair a pair $(u,v)$ crossed by $\theta$ by sequentially repairing its crossings and taking limits, so that $(u,v)$ is not crossed by the limiting transport rule. For this purpose we
define that a sequence of transport rules $\theta_n$ \emph{converges uniformly} to a transport rule 
$\theta$ if $$\lim_{n\to\infty} \sum_{x,y\in\Z} \big| \theta_n(x,y)-\theta(x,y) \big|=0. $$
Denote by $y_1, y_2, \ldots$ the 
sequence of sites $v<y_1< y_2<\cdots$ such that $\theta(u, y_n)>0$, and by $x_1, x_2,\ldots$ the 
sequence of sites $u>x_1> x_2>\cdots$ such that $\theta(x_n, v)>0$. Note that both sequences could be finite or infinite. First we successively repair the crossings $x_1<u<v<y_n$, for $n=1,2,\ldots$. 
The total mass moved in the $n$th repair is bounded by $4\theta(u,y_n)$ and because $\sum_n \theta(u,y_n)\leq 1$ we can infer that the sequence of 
repaired transport rules converges uniformly to a transport rule $\theta_1$. Of course, here and below if a sequence is finite we take the last  element of the sequence as limit. We continue by repairing the crossings $x_2<u<v<y_n$ of $\theta_1$, for $n=1,2,\ldots$, 
obtaining $\theta_2$, and so on. We obtain a sequence $\theta_1,\theta_2,\ldots$ 
of transport rules. The amount of mass moved when going from $\theta_{n-1}$ to $\theta_{n}$ is bounded by $4\theta(x_n,v)$. As $\sum_n \theta(x_n,v)<\infty$,
we infer that the sequence $(\theta_n)_n$ converges uniformly to a limiting transport rule. We observe that this transport rule  balances $L^\mu$ and $L^\nu$ and that $(u,v)$ is not crossed by it.

\begin{figure}
\begin{tikzpicture}
[touch/.style={circle, shading=ball, ball color=white, inner sep=0.5mm, minimum size=0.2cm},
C1Touch/.style={circle, shading=ball, ball color=black, inner sep=0.5mm, minimum size=0.2cm},
location/.style={inner sep=0mm}
]
\draw (-8,0) to (0,0);
\foreach \time/\name in {%
-15.5/w1, -11.5/w2
}
{
\node[touch] (\name) at (0.5*\time,0) {};
};
\foreach \time/\name in {%
-6.0/w3,-1.0/w4%
}
{
\node[C1Touch] (\name) at (0.5*\time,0) {};
}
\draw (-8,-4) to (0,-4);
\foreach \time/\name in {%
{-15.5,-4}/w21, {-11.5,-4}/w22
}
{
\node[touch] (\name) at (0.5*\time,0) {};
};
\foreach \time/\name in {%
{-6.0,-4}/w23,{-1,-4}/w24%
}
{
\node[C1Touch] (\name) at (0.5*\time,0) {};
}
\draw (-7.75,-0.4) node[rectangle] {$x$};
\draw (-5.75,-0.4) node[rectangle] {$u$};
\draw (-3.0,-0.4) node[rectangle] {$v$};
\draw (-0.5,-0.4) node[rectangle] {$y$};
\draw (0.2,0) node[rectangle] {$\mathbb{Z}$};
\draw (-7.75,-4.4) node[rectangle] {$x$};
\draw (-5.75,-4.4) node[rectangle] {$u$};
\draw (-3.0,-4.4) node[rectangle] {$v$};
\draw (-0.5,-4.4) node[rectangle] {$y$};
\draw (0.2,-4) node[rectangle] {$\mathbb{Z}$};
\draw (w1.north east) to [bend left=60] node[above, scale=.7]{$\theta(x,v)$} (w3.north west);
\draw (w2.north east) to [bend left=60] node[above, scale=.7]{$\theta(u,y)$} (w4.north west);
\draw[densely dashed] (w21.north east) to [bend left=70] node[above, scale=.7]{$\theta(x,y)$ $\boldsymbol{+\theta_{\min}}$} (w24.north west);
\draw[densely dashed] (w22.north east) to [bend left=60] node[below=3pt, scale=.7]{$\theta(u,v)$ $\boldsymbol{+\theta_{\min}}$} (w23.north west);
\draw[loosely dotted] (w21.north east) to [bend left=60] node[above, scale=.7]{$\theta(x,v)$ $\boldsymbol{-\theta_{\min}}$} (w23.north west);
\draw (w22.north east) to [bend left=60] node[above, scale=.7]{$\theta(u,y)$ $\boldsymbol{-\theta_{\min}}$} (w24.north west);
\end{tikzpicture}
\caption{The picture above shows a crossing. Its weight
${\theta_{\min}}:=\theta(x,v)\wedge\theta(u,y)$ is assumed
 to be $\theta(x,v)$, so that in the picture below we see that after the repair
 the dotted edge has weight zero, and the crossing is therefore removed.}
 \label{f1}
\end{figure}
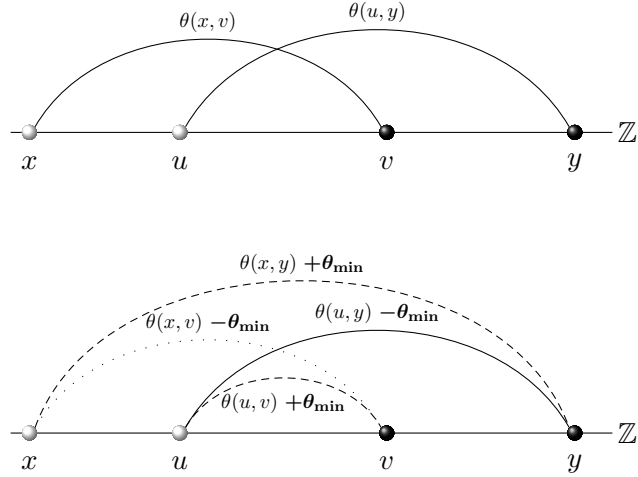

\begin{lemma}\label{5.1}
Suppose that $\theta$  is a transport rule balancing $L^\mu$ and $L^\nu$ and $A\subset\Z$ a finite interval. Then, by repairing pairs crossed by $\theta$ in a given order, 
we obtain a transport rule $\theta_*$ 
balancing $L^\mu$ and~$L^\nu$, such that  if $u,v\in A$ then 
$(u,v)$ is not crossed by $\theta_*$.
\end{lemma}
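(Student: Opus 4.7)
The plan is to iterate the single-pair repair procedure from the paragraph preceding the lemma. Since $A$ is a finite interval there are only finitely many pairs $(u,v)$ with $u,v\in A$ and $u<v$; enumerate them as $(u_1,v_1),\ldots,(u_N,v_N)$ in the order stipulated by the statement. Starting from $\theta^{(0)}=\theta$, I let $\theta^{(k)}$ be the single-pair repair of the pair $(u_{((k-1)\bmod N)+1},v_{((k-1)\bmod N)+1})$ applied to $\theta^{(k-1)}$, so that the construction cycles through all pairs in $A$ indefinitely. Each $\theta^{(k)}$ is a transport rule balancing $L^\mu$ and $L^\nu$, and immediately after step $k$ the pair just repaired is not crossed by $\theta^{(k)}$. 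The required $\theta_*$ will be obtained as the uniform limit of the $\theta^{(k)}$.

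To establish convergence I would use the localized potential
$$\Phi_A(\theta):=\sum_{x<y}\theta(x,y)\,g(y-x)\,\mathbbm{1}\{x\in A\text{ or }y\in A\},\qquad g(z):=1-e^{-z}.$$
The two marginal bounds $\sum_y\theta(x,y)\le 1$ for each $x$ and $\sum_{x\neq y}\theta(x,y)\le m_i L^\nu(y)$ for each $y$ (the latter from the balance condition) show that $\Phi_A(\theta)\le|A|+m_i L^\nu(A)<\infty$ on every transport rule. The crucial computation is that a single elementary repair at a crossing $(x,u,v,y)$ with $u,v\in A$ decreases $\Phi_A$ by at least $c\,\theta_{\min}$, where $c:=(1-e^{-1})^2 e^{-\operatorname{diam}(A)}$. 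When the $(x,y)$-edge is counted in $\Phi_A$ this rests on the explicit factorization
$$g(v-x)+g(y-u)-g(y-x)-g(v-u)=(1-e^{-(u-x)})(1-e^{-(y-v)})e^{-(v-u)}\ge c,$$
since $u-x,y-v\ge 1$ and $v-u\le\operatorname{diam}(A)$; when the $(x,y)$-edge is not counted, an even easier estimate shows that the decrease is at least $\theta_{\min}(1-e^{-1})$.

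Summing over all elementary repairs, the total $\theta_{\min}$ is at most $\Phi_A(\theta)/c<\infty$, and since each repair moves at most $4\theta_{\min}$ of mass, the sequence of intermediate transport rules is Cauchy in uniform norm and converges to some $\theta_*$. Balance passes to $\theta_*$ by continuity of the linear balance relation under uniform convergence. Suppose now for contradiction that $\theta_*$ crossed some pair $(u,v)\in A\times A$ via $x<u<v<y$. Uniform convergence would keep $\theta^{(k)}(x,v)$ and $\theta^{(k)}(u,y)$ bounded below for large~$k$; but along the infinite subsequence of indices at which the pair $(u,v)$ is just repaired, the defining property of the single-pair repair forces $\theta^{(k)}(x,v)\cdot\theta^{(k)}(u,y)=0$, which upon passing to the limit contradicts the positivity. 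Hence $\theta_*$ has no crossed pair in $A$.

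The main obstacle is designing $\Phi_A$: one needs a potential that is simultaneously finite on arbitrary transport rules (forcing localization to edges incident to~$A$ together with a bounded strictly concave~$g$) and strictly decreasing under each elementary repair by a quantity proportional to the mass moved. The bounded choice $g(z)=1-e^{-z}$ combined with the indicator $\mathbbm{1}\{x\in A\text{ or }y\in A\}$ resolves this tension, after which the Cauchy property and the limiting non-crossing argument are routine.
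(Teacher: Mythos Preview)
Your argument is correct and takes a genuinely different route from the paper's. The paper exhibits a specific \emph{finite} sequence of single-pair repairs: it processes the coloured sites $v_1<\cdots<v_n$ in $A$ from left to right, and for each $v_j$ repairs the pairs $(u,v_j)$ with $u$ running right-to-left over the as-yet-uncancelled white sites in $A$ to its left. The greedy order guarantees that each such repair forces a full unit of mass from $u$ to $v_j$ (because there are no balls strictly between them), so the two balls are ``cancelled'' and no later repair can recreate a crossing at an earlier $v_{j'}$. No potential is needed, and the procedure terminates after at most $|A|$ single-pair repairs. You instead cycle through all pairs indefinitely and control convergence via the bounded strictly concave potential $\Phi_A$; the choice $g(z)=1-e^{-z}$ with its factorization identity is exactly what delivers a decrease proportional to $\theta_{\min}$ at every elementary repair, and your subsequence argument for non-crossing of the limit is clean. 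Your method is order-insensitive and would transfer more readily to other cost structures, while the paper's is more elementary and explicitly constructive; for the subsequent Lemmas~\ref{iden} and~\ref{opti} the two feed in identically.
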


\begin{proof}
Without loss of generality the left endpoint of $A$ carries a white ball, and its right endpoint carries a coloured ball. Let $v_1,\ldots,v_n$ be the sites in $A$ carrying coloured balls, ordered from left to right. We go through these sites in order, starting with $v_1$. Take $u_1$ to be the rightmost site
to the left of $v_1$ carrying a white ball. Repair the pair $(u_1,v_1)$ as above, and observe that the resulting transport rule transports a unit mass from $u_1$ to~$v_1$. We declare the white ball at site $u_1$ and one of the coloured balls at~$v_1$ \emph{cancelled}. If $v_1$ carries an uncancelled ball and there are uncancelled white balls on sites of $A$ to the left of $v_1$, we choose the rightmost of those, say $u_2$, 
repair the pair $(u_2, v_1)$, and cancel two balls as above. We continue until we run out of uncancelled balls.
The resulting transport rule has the property that none of the pairs $(u,v_1)$, with $u\in A$, is crossed, and from all sites carrying cancelled white balls a unit mass is transported to site $v_1$. 

We now move to the next coloured ball $v_2$ and repair all pairs $(u,v_2)$, where $u$ goes from right to left through all sites in $A\cap (-\infty,v_2)$ carrying uncancelled white balls. We do this until we run out of uncancelled white balls to the left of,  or coloured balls on the site~$v_2$. Observe that at the end of this step none of the pairs $(u,v_1)$ or $(u,v_2)$, with $u\in A$, is crossed by the resulting transport rule.
We continue, moving to the next coloured ball until all coloured balls in $A$ are exhausted. 
At the end of this finite procedure we obtain a transport rule $\theta_*$ 
balancing $L^\mu$ and $L^\nu$, such that  if $u,v\in A$ then $(u,v)$ is not crossed by $\theta_*$.  
\end{proof}

We call a set $A$ an \emph{excursion} if it is an interval $[m, n]$ such that that there is the same number of  white and coloured balls on the sites of~$A$,  but the number of white balls exceeds the number of coloured balls on every subinterval $[m,k]$, for $m\leq k<n$. Observe that if $A$ is an excursion, then it is an interval of the form $[m,\tau_*(m)]$ where $m$ carries a white ball, but not all such intervals are excursions. Moreover, for every $x\in A$, we have both  $\tau_*(x)\in A$ and  $\tau_*^{-1}(x)\subset A$.
\medskip

\begin{lemma}\label{iden}
Let $A$ be an excursion and \smash{$\theta_*$} a transport rule balancing $L^\mu$ and $L^\nu$, such that
any pair $(u,v)$ with $u,v\in A$ is not crossed by~$\theta_*$. Then $\theta_*$  agrees  in~$A$ with the 
allocation rule~$\tau_*$, in the sense that
$\theta_*(x,y)=\1\{\tau_*(x)=y\}$ and $\theta_*(y,x)=\1\{\tau_*(y)=x\}$, for all $x\in A$ and $y\in\Z$.
\end{lemma}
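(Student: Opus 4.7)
The plan is to prove Lemma~\ref{iden} by strong induction on the number $N$ of white balls in~$A$. The base case $N=1$ is immediate: $A=[m,n]$ contains exactly one white at~$m$ and one coloured unit at~$n$, so the balancing property forces $\theta_*(m,n)=1$, in agreement with $\tau_*(m)=n$.

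In the inductive step I would first show that $\theta_*$ transports no mass across the boundary of~$A$. Since $A$ contains equal total white and coloured mass, total inflow (mass from whites $x<m$ to coloureds $v\in A$) equals total outflow (mass from whites $u\in A$ to coloureds $y>n$); denote their common value by~$G$. Suppose $G>0$. Let $v_0\in A$ be the rightmost coloured site receiving inflow, and let $E(k)$ denote the cumulative excess of whites over coloureds on $[m,k]$. A balance of flows on $[m,v_0]$ shows that whites in $[m,v_0)$ must send a total mass of at least $E(v_0)+G>0$ to coloureds in $(v_0,n]$, so there exist $u'\in[m,v_0)\cap A$ white and $v''\in(v_0,n]\cap A$ coloured with $\theta_*(u',v'')>0$. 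Then the pair $(u',v_0)\subset A$ is crossed by $\theta_*$ via the witnesses $x<m<u'$ (supplying $\theta_*(x,v_0)>0$) and $v''>v_0$ (supplying $\theta_*(u',v'')>0$), contradicting the non-crossing hypothesis.

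Next I would show $\theta_*(m,n)=1$. Suppose instead $\theta_*(m,y)>0$ for some coloured $y\in A$ with $y<n$. Non-crossing of the pairs $(u',y)$ for $u'\in(m,y)$, using $m$ as the left witness, forbids any arrow $(u',v')$ with $m<u'<y<v'$, so whites in $(m,y)$ send all their mass to coloureds in $(m,y]$. A direct mass count then yields
\[
\theta_*(m,y)=1-E(y),
\]
but $E$ takes non-negative integer values with $E(k)\ge 1$ for $k\in[m,n-1]$ by the excursion property, so $E(y)=0$ forces $y=n$.

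To close the induction, after accounting for the unit arrow $(m,n)$ the residual transport rule balances the remaining $N-1$ whites in $(m,n)$ against the remaining coloured units in $(m,n]$, where $n$ still carries $K-1$ units and $K$ is the multiplicity of~$n$. The reduced cumulative function $E'(k):=E(k)-1$ is non-negative on $[m,n]$ and vanishes at $m$ and $n$, so its zero set partitions the residual ball support into disjoint sub-excursions, each strictly smaller than~$A$ in the number of whites. Since any crossing inside a sub-excursion is also a crossing with both endpoints in~$A$, the non-crossing hypothesis persists, and the boundary-crossing argument applied to each sub-excursion prevents mass from being exchanged between distinct sub-excursions. The induction hypothesis then gives $\theta_*=\tau_*$ inside each sub-excursion, and since $\tau_*$ admits the same recursive decomposition by the greedy algorithm, the agreement extends to all of~$A$. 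The main obstacle is the boundary non-crossing step, where one must use the excursion structure of~$E$ to ensure that a forbidden crossing can always be located from any nonzero boundary flow.
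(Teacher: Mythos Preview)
Your inductive approach is sound and genuinely different from the paper's argument. The paper proceeds directly: for each white site $x\in A$ it rules out $\theta_*(x,v)>0$ for $v<\tau_*(x)$ and for $v>\tau_*(x)$ separately, in each case producing a crossed pair inside~$A$ from a local mass-balance argument; the coloured-site statement then follows by counting preimages. Your method is structural: peel off the outermost arc $(m,n)$ and recurse on the Dyck-path decomposition of the residual excursion. This buys a clearer picture of the nested structure of~$\tau_*$, at the cost of more bookkeeping; the paper's proof is shorter because it avoids setting up the recursion entirely.

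Two points to tighten. First, in your boundary step the balance on $[m,v_0]$ gives that whites in $[m,v_0)$ send mass exactly $E(v_0)+G$ to coloureds in $(v_0,\infty)$, not ``at least'' to $(v_0,n]$; but this is harmless, since a target $y>n$ still yields the crossed pair $(u',v_0)\subset A$ with witnesses $x<m$ and $y>v_0$. Similarly, your displayed identity should read $\theta_*\big(m,(m,y]\big)=1-E(y)$ rather than $\theta_*(m,y)=1-E(y)$; the contradiction with $E(y)\ge 1$ is the same. Second, and more substantively, your induction is not literally on the lemma as stated: after removing the white at~$m$ and one coloured unit at~$n$, the residual configuration is no longer the local-time picture of the chain, and a sub-excursion terminating at~$n$ is \emph{not} an excursion for the original configuration (it carries one excess coloured unit there). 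You should therefore phrase the inductive statement for arbitrary white/coloured ball configurations satisfying the integer constraint, of which the Markov-chain local times are a special case; the argument is purely combinatorial and goes through unchanged at that level of generality.
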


\begin{proof} We start by fixing a site $x\in A$ carrying a 
white ball, and note that, by definition of an excursion, we also have $\tau_*(x)\in A$. We show by contradiction that $\theta_*$ transports no
mass from $x$ to a point other than~$\tau^*(x)$.

First, suppose that there exist $x<v<\tau_*(x)$ with $\theta_*(x,v)>0$. As there are more white than coloured balls on the sites in $[x,v]$, and as every site carries at most 
one white ball, we find $x'\in(x,v)$ such that the sites of $[x',v]$ carry the same number of white and coloured balls. As $\theta_*(x,v)>0$ not all white balls in $[x',v]$
are matched within that interval, and there must also exist $u\in[x',v)$ and
$y>v$ such that $\theta_*(u,y)>0$. So we have found a pair $(u,v)$ with $u,v\in A$, which is crossed by $\theta_*$, 
and hence a contradiction.

Second, suppose that there exist $v>\tau_*(x)$ with $\theta_*(x,v)>0$. As there are at least as many coloured balls as white balls in 
$[x,\tau_*(x)]$ not all coloured balls are matched within that interval, and hence there exists a $y\in(x,\tau_*(x)]$ and a site $u< x$ 
with $\theta_*(u,y)>0$. So we have found a pair $(x,y)$ with $x,y\in A$, which is crossed by~$\theta_*$, and hence a contradiction.
We conclude that $\theta_*(x,y)=\1\{\tau_*(x)=y\}$  for all $x\in A$.

Now fix a site $x\in A$ carrying balls of colour~$j$. Then $\tau_*^{-1}(x)$ is a set of
\smash{$(m_i/m_j)\nu_j$} points in $A$. Hence, by the first part, \smash{$\theta_*(y,x)=\1\{\tau_*(y)=x\}$} for all $y\in\tau_*^{-1}(x)$. Moreover, 
$$\sum_{y\in\tau_*^{-1}(x)} \theta_*(y,x) =(m_i/m_j)\nu_j=\sum_{y\in\Z} \theta_*(y,x).$$ 
Hence $\theta_*(y,x)=0=\1\{\tau_*(y)=x\}$ also for all $y\not\in\tau_*^{-1}(x)$.
\end{proof}

We now let $\psi$ be a non-negative, concave function on the non-negative integers~$\N_{0}$.
Note that this implies that $\psi\colon \N_0\to[0,\infty)$ is non-decreasing. We further assume that
$\psi(0)=0$, an assumption which causes no loss of generality in Theorem~\ref{T:Optimality}.
We write $\psi(n)=0$ for $n\leq0$ to simplify the notation.%
\smallskip%

\begin{lemma}\label{opti}
Let $A$ be an excursion and suppose $\theta$ is a transport rule balancing $L^\mu$ and $L^\nu$. Then\\
$$
\sum_{x\in A} \psi\big(\tau_*(x)-x\big) + \sum_{x\in \tau_*^{-1}(A)} \psi\big(\tau_*(x)-x\big)
\leq \sum_{\heap{x\in A}{y \in\Z}} \theta(x,y)\,\psi(y-x) + \sum_{\heap{x \in \Z}{y\in A}} \theta(x,y)\,\psi(y-x) .
$$
\end{lemma}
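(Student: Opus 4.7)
The strategy is to recast both sides as values of a single linear functional on transport rules, and then transform $\theta$ into an object agreeing with $\tau_*$ inside the excursion $A$, while controlling that functional. Introduce
\begin{equation*}
F(\theta):=\sum_{x\in A,\, y\in\Z}\theta(x,y)\,\psi(y-x)+\sum_{x\in \Z,\, y\in A}\theta(x,y)\,\psi(y-x),
\end{equation*}
for $\theta$ a transport rule balancing $L^\mu$ and $L^\nu$, and let $\theta_\star(x,y):=\1\{\tau_*(x)=y\}$ denote the allocation rule associated with~$\tau_*$. Then $F(\theta_\star)$ is exactly the left-hand side of Lemma~\ref{opti}, while $F(\theta)$ is its right-hand side, so it suffices to prove $F(\theta_\star)\le F(\theta)$.

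First I would apply Lemma~\ref{5.1} to $\theta$ with the finite interval~$A$, producing a transport rule $\theta'$, still balancing $L^\mu$ and~$L^\nu$, such that no pair $(u,v)$ with $u,v\in A$ is crossed by~$\theta'$. By Lemma~\ref{iden} this rule $\theta'$ coincides with $\theta_\star$ on all pairs $(x,y)$ with $x\in A$ or $y\in A$, hence $F(\theta')=F(\theta_\star)$. The task therefore reduces to showing $F(\theta')\le F(\theta)$.

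The key step is that each elementary crossing repair weakly decreases~$F$. Given a crossing $(x,u,v,y)$ with $x<u<v<y$ and $u,v\in A$, the repair shifts mass $\delta:=\theta(x,v)\wedge\theta(u,y)$ from the pairs $(x,v),(u,y)$ onto $(x,y),(u,v)$; since $u,v\in A$ are counted in both sums defining $F$ while $x$ and $y$ contribute only when they lie in $A$, a direct bookkeeping gives
\begin{equation*}
\tfrac{1}{\delta}\Delta F=\bigl(\1\{x\in A\}[\psi(y-x)-\psi(v-x)]+[\psi(v-u)-\psi(y-u)]\bigr) +\bigl(\1\{y\in A\}[\psi(y-x)-\psi(y-u)]+[\psi(v-u)-\psi(v-x)]\bigr).
\end{equation*}
Concavity of $\psi$ applied to the common increment $y-v$ with base points $v-u<v-x$ yields $\psi(y-x)-\psi(v-x)\le \psi(y-u)-\psi(v-u)$, and the symmetric version $\psi(y-x)-\psi(y-u)\le \psi(v-x)-\psi(v-u)$ follows from the common increment $u-x$ with base points $v-u<y-u$. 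Combined with the fact that $\psi$ is non-decreasing and vanishes on $(-\infty,0]$, each of the two bracketed expressions above is at most~$0$ in every case, so $\Delta F\le 0$.

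Since Lemma~\ref{5.1} produces $\theta'$ as the uniform limit of a sequence $(\theta_n)$ of such elementary repairs, the non-negativity of the summands in $F$ together with Fatou's lemma yields $F(\theta')\le\liminf_n F(\theta_n)\le F(\theta)$, which completes the argument (the case $F(\theta)=\infty$ being trivial). The main obstacle I anticipate is the sign computation for~$\Delta F$: in the two mixed cases where exactly one of $x,y$ lies in~$A$ the single concavity inequality is not enough on its own, and must be combined with the monotonicity of~$\psi$ via the decomposition above; the limiting step is then routine.
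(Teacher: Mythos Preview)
Your proposal is correct and follows essentially the same route as the paper's proof: define the functional $F$, show each elementary crossing repair weakly decreases it using concavity and monotonicity of~$\psi$, pass to the limit via Lemma~\ref{5.1}, and then invoke Lemma~\ref{iden} to identify the result with $\tau_*$. Your unified indicator formula for $\Delta F$ is a slight streamlining of the paper's explicit case split over the four possibilities for $\1\{x\in A\},\1\{y\in A\}$, and your mention of Fatou for the limiting step makes explicit what the paper leaves implicit.
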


\begin{proof}
Observe that, by concavity, for all $a,b,c \in \N_{0}$, we have
\begin{equation}\label{impro}
\psi(a+b)+\psi(b+c) \ge \psi(a+b+c) + \psi(b).\\[2mm]
\end{equation}
Fix a crossing $x<u<v<y$  with $u,v\in A$, and let $\theta'$ be the
result of repairing the crossing. We show that repairing the crossing does not increase
$$\sum_{\heap{x\in A}{y \in\Z}} \theta(x,y)\,\psi(y-x) +
\sum_{\heap{x \in \Z}{y\in A}} \theta(x,y)\,\psi(y-x) $$
by looking at the difference of the repaired and original state of the sum.
If $x,y\not\in A$ we get
\begin{align*}
\theta'(u,y)&\psi(y-u)+ 2\theta'(u,v)\psi(v-u)+\theta'(x,v)\psi(v-x)\\
&\phantom{xxx} - \big(\theta(u,y)\psi(y-u)+ 2\theta(u,v)\psi(v-u)+\theta(x,v)\psi(v-x)\big)\\
& = \big( \theta(x,v)\wedge \theta(u,y)\big) \big( 2\psi(v-u)-\psi(v-x)-\psi(y-u)\big) \leq 0,
\end{align*}
as $\psi$ is non-decreasing. If $x\in A, y\not\in A$ we get
\begin{align*}
\theta'(u,y)&\psi(y-u)+ 2\theta'(u,v)\psi(v-u)+2\theta'(x,v)\psi(v-x)+\theta'(x,y)\psi(y-x)\\
&\phantom{xxx} - \big(\theta(u,y)\psi(y-u)+ 2\theta(u,v)\psi(v-u)+2\theta(x,v)\psi(v-x)+\theta(x,y)\psi(y-x)\big)\\
& = \big( \theta(x,v)\wedge \theta(u,y)\big) \big( 2\psi(v-u)+\psi(y-x)-2\psi(v-x)-\psi(y-u)\big) \\
& \leq  \big( \theta(x,v)\wedge \theta(u,y)\big) \big( \psi(v-u)+\psi(y-x)-\psi(v-x)-\psi(y-u)\big) \leq 0,
\end{align*}
using first that $\psi$ is non-decreasing and then~\eqref{impro}. The case $x\not\in A, y\in A$ is analogous.
If $x,y\in A$ the difference is twice
\begin{align*}
\theta'(x,v)&\psi(v-x)+\theta'(u,y)\psi(y-u)+ \theta'(u,v)\psi(v-u)+\theta'(x,y)\psi(y-x)\\
&\phantom{xxx} - \big(\theta(x,v)\psi(v-x)+\theta(u,y)\psi(y-u)+ \theta(u,v)\psi(v-u)+\theta(x,y)\psi(y-x) \big)\\
& = \big( \theta(x,v)\wedge \theta(u,y)\big) \big( \psi(y-x)+\psi(v-u)-\psi(v-x)-\psi(y-u)\big) \leq 0,
\end{align*}
by application of~\eqref{impro}, 
which shows that in all cases the sum above is not increased by the repair.
\pagebreak[3]

Repairing crossings successively as described in Lemma~\ref{5.1}, we get
$$
\sum_{\heap{x\in A}{y \in\Z}} \theta_*(x,y)\,\psi(y-x) + \sum_{\heap{x \in \Z}{y\in A}} \theta_*(x,y)\,\psi(y-x)
\leq \sum_{\heap{x\in A}{y \in\Z}} \theta(x,y)\,\psi(y-x) + \sum_{\heap{x \in \Z}{y\in A}} \theta(x,y)\,\psi(y-x) .
$$
By Lemma~\ref{iden} we have $\theta_*(x,y)=\1\{\tau(x)=y\}$ if $x\in A$ or $y\in A$, and this allows us to rewrite the left hand side as stated.
\end{proof}
\smallskip

\begin{lemma}\label{ergo}
Let $T\geq 0$ be a (possibly randomized) unbiased shift and $\theta\colon \Omega \times\Z\times\Z\to[0,1]$ be the associated transport rule. Denote by $(T_n \colon n\in\Z)$
the times in which $X$ visits the state~$i$, in order so that $T_0=0$. Let $\psi\colon \Z \to [0,\infty)$
be concave.  Then, $\prob_i$-almost surely,
$$\lim_{n\to\infty} \frac1n \Big\{\sum_{k=0}^{T_n-1} \sum_{\ell=k+1}^\infty \theta(k,\ell) \psi(\ell-k) +
\sum_{\ell=-\infty}^{k-1}  \theta(\ell,k)  \psi(k-\ell)\Big\}
= 2\E_i ^{_{^\oplus}}\psi(T),$$
and
$$\lim_{m\to\infty} \frac1m \Big\{\sum_{k=T_{-m}}^1 \sum_{\ell=k+1}^\infty \theta(k,\ell) \psi(\ell-k)
+ \sum_{\ell=-\infty}^{k-1} \theta(\ell,k) \psi(k-\ell)\Big\}
= 2\E_i^{_{^\oplus}} \psi(T).$$
\end{lemma}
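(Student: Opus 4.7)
The plan is to identify both limits as Birkhoff averages along visits of~$X$ to state~$i$ and to compute the ergodic expectation through a Palm-type mass transport identity. Under $\prob_i$ the strong Markov property makes the excursions $(\omega|_{[T_n,T_{n+1})}\colon n\in\Z)$ i.i.d., so the return shift $S:=\sigma_{T_1}$ is measure preserving and ergodic. Setting
\[
f_\omega(k):=\sum_{\ell>k}\theta_\omega(k,\ell)\psi(\ell-k)+\sum_{\ell<k}\theta_\omega(\ell,k)\psi(k-\ell), \quad F(\omega):=\sum_{k=0}^{T_1-1}f_\omega(k),
\]
translation invariance of $\theta$ gives $F(S^m\omega)=\sum_{k=T_m}^{T_{m+1}-1}f_\omega(k)$, so that Birkhoff's pointwise ergodic theorem applied in both directions of time yields the $\prob_i$-almost sure convergences
\[
\frac1n\sum_{k=0}^{T_n-1}f_\omega(k)\longrightarrow\E_i F \quad\text{and}\quad \frac1m\sum_{k=T_{-m}}^{-1}f_\omega(k)\longrightarrow\E_i F,
\]
with common limit possibly~$\infty$. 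The bounded contribution from the finite range~$\{0,1\}$ vanishes after division by~$m$, so the~``$1$'' appearing in the statement may be read as~$T_0-1=-1$. It will therefore suffice to show $\E_i F=2\,\E_i^{_{^\oplus}}\psi(T)$.

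I would next split $F=F_{\rm out}+F_{\rm in}$ according to the two summands in $f_\omega$. Since $\theta_\omega(k,\ell)=0$ whenever site~$k$ does not carry a white ball and $k=0$ is the only white ball site inside $[0,T_1)$, one has $F_{\rm out}=f_{\rm out}(0)=\sum_{\ell>0}\theta_\omega(0,\ell)\psi(\ell)$; here $\theta_\omega(0,0)=0$ because $\nu_i=0$, and $\theta_\omega(0,\ell)=0$ for $\ell<0$ because $T\ge 0$. The association $\Q_\omega(T=t)=\theta_\omega(0,t)$ then immediately gives $\E_i F_{\rm out}=\E_i^{_{^\oplus}}\psi(T)$.

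The crux will be the matching identity $\E_i F_{\rm in}=\E_i^{_{^\oplus}}\psi(T)$. My plan is to rewrite $\E_i[\,\cdot\,]=m_i^{-1}\E[\1\{X_0=i\}\,\cdot\,]$, swap the order of summation over~$(\ell,k)$, and split the expression by the sign of~$\ell$. The case $\ell>0$ is vacuous because $\theta(\ell,k)>0$ forces $X_\ell=i$, contradicting the indicator requirement that no visit to $i$ occurs in $[1,k]$. For $\ell=0$ and $\ell<0$ I would use the stationarity of~$\p$ to translate by~$-\ell$, sending the transport source to the origin and turning the indicator into $\{X_u=i,X_{u+1}\ne i,\ldots,X_m\ne i\}$ with $m:=k-\ell$ and $u:=-\ell$. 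Summing over the admissible values $u\in\{0,\ldots,m\}$, the indicators collapse via
\[
\sum_{u=0}^m\1\{X_u=i,\;X_{u+1}\ne i,\ldots,X_m\ne i\}=\1\{\exists\text{ a visit to }i\text{ in }[0,m]\},
\]
and on the support of~$\theta_\omega(0,m)$, where $X_0=i$, this indicator equals~$1$. The combined expression thus reduces to $\E[\1\{X_0=i\}\sum_{m\ge 1}\theta(0,m)\psi(m)]=m_i\,\E_i^{_{^\oplus}}\psi(T)$, and dividing by~$m_i$ yields the desired identity.

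The main obstacle will be precisely this mass transport step: the mass arriving at a single excursion originates in transports from \emph{all} preceding excursions, and the argument must verify that after translating each source to the origin the shifted indicators combine in just the right way to recover one unit of expected outgoing mass per excursion. Once this identity is established, everything else is a routine application of the ergodic theorem.
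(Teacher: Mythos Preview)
Your proof is correct and follows essentially the same route as the paper: Birkhoff's ergodic theorem along the return times to~$i$ reduces both statements to the computation of $\E_i F$, and the identification $\E_i F=2\,\E_i^{_{^\oplus}}\psi(T)$ is a Palm/mass-transport identity. The only real difference is packaging. The paper invokes the cycle (Kac) formula $\E_i\sum_{k=0}^{T_1-1}h(\sigma_{-k}\omega)=m_i^{-1}\E[h]$ to pass directly to $\frac{1}{m_i}\E\sum_{\ell<0}\theta(\ell,0)\psi(-\ell)$, and then uses translation invariance (Lemma~\ref{L:mass_transport_pr}) once to flip this into $\frac{1}{m_i}\E\sum_{\ell>0}\theta(0,\ell)\psi(\ell)$. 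You instead unpack both steps by hand: you write the condition $\{0\le k<T_1,X_0=i\}$ as the indicator $\{X_0=i,X_1\ne i,\ldots,X_k\ne i\}$, shift each term by $-\ell$ under the stationary measure, and then observe that the shifted indicators, summed over $u=-\ell\in\{0,\ldots,m\}$, telescope to the indicator of a visit to~$i$ in $[0,m]$, which is automatic on the support of $\theta(0,m)$. This is exactly a bare-hands derivation of the cycle formula composed with mass transport, so the two arguments are the same idea at different levels of abstraction.

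One small wording issue: the ``bounded contribution from the finite range $\{0,1\}$'' need not be bounded; $f_\omega(0)+f_\omega(1)$ is merely $\p_i$-a.s.\ finite (it has finite expectation when $\E_i F<\infty$, and when $\E_i F=\infty$ the limit is $+\infty$ regardless). This does not affect the argument.
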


\begin{proof}
We observe, from the strong Markov property, that $\xi_n=(X_{T_{n-1}+1}, \ldots, X_{T_n})$,
$n\in\Z$, are independent and identically distributed random vectors. Hence their shift is stationary and ergodic, see for example~\cite[8.4.5]{Du02}. By the ergodic theorem, see e.g.~\cite[8.4.1]{Du02}, $\prob_i$-almost surely,
\begin{align*}
\lim_{n\to\infty} \frac1n \Big\{\sum_{k=0}^{T_n-1} \sum_{\ell=k+1}^\infty \theta(k,\ell) \psi(\ell-k) \Big\}
= \E_i \sum_{\ell=1}^\infty \theta_\omega(0,\ell) \psi(\ell)= \E_i^{_{^\oplus}} \psi(T).
\end{align*}
Similarly,
\begin{align*}
\lim_{n\to\infty} \frac1n \Big\{ \sum_{k=0}^{T_n-1} \sum_{\ell=-\infty}^{k-1}  \theta(\ell,k)  \psi(k-\ell)
\Big\} =
\E_i \sum_{k=0}^{T_1-1} \sum_{\ell=-\infty}^{k-1} \theta_\omega(\ell,k) \psi(k-\ell).
\end{align*}
The expectation equals
$$\sum_{j\in\s} \frac{m_j}{m_i}\,
\E_j \sum_{\ell=-\infty}^{-1} \theta_\omega(\ell,0) \psi(-\ell)= \frac{1}{m_i} \E \sum_{\ell=-\infty}^{-1} \theta_\omega(\ell,0) \psi(-\ell)
= \frac1{m_i} \E \sum_{\ell=1}^\infty \theta_\omega(0,\ell) \psi(\ell)=\E_i^{_{^\oplus}}\psi(T),$$
using translation invariance of $\theta$.
The second statement follows in the same manner.
\end{proof}


\begin{figure}\begin{center}\scalebox{0.9}{
\begin{tikzpicture}
\draw[->] (-5,0) -- (11,0) node[right] {$n$};
\draw[-] (0,-4.5) -- (0,4.5); 
\draw[] (8,4) node[] {\footnotesize{$L^{\mu}([0,n))-L^{\nu}([0,n)) \,\,(n\geq 0)$}};
\draw[] (-3,4) node[] {\footnotesize{$-L^{\mu}([n,0))+L^{\nu}([n,0)) \,\,(n<0)$}};

\foreach \x/\xtext in {-4.7/T_{-5}, -4.1/T_{-4}, -3.6/T_{-3}, -2.9/T_{-2}, -1.4/T_{-1},
0/T_{0}, 1.4/T_{1}, 2.3/T_{2}, 5.5/T_{3}, 7/T_{4}, 7.8/T_{5}, 10.5/T_{6} }
\draw[shift={(\x,0)}] (0pt,2pt) -- (0pt,-2pt) node[above=3pt, scale=0.7] {$\xtext$};

\node (s1) at (-4.7,-3) [fill, circle, inner sep=1.2pt] {};
\draw [thick] (-5,-3) -- (-4.7,-3);

\node (h2) at (-4.7,-2) [draw, circle, inner sep=1.2pt] {};
\node (s2) at (-4.1,-2) [fill, circle, inner sep=1.2pt] {};
\draw [thick] (h2) to (s2);

\node (h3) at (-4.1,-1) [draw, circle, inner sep=1.2pt] {};
\node (s3) at (-3.6,-1) [fill, circle, inner sep=1.2pt] {};
\draw [thick] (h3) to (s3);

\node (h4) at (-4.1,-1) [draw, circle, inner sep=1.2pt] {};
\node (s4) at (-3.6,-1) [fill, circle, inner sep=1.2pt] {};
\draw [thick] (h4) to (s4);

\node (h5) at (-3.6,0) [draw, circle, inner sep=1.2pt] {};
\node (s5) at (-2.9,0) [fill, circle, inner sep=1.2pt] {};
\draw [thick] (h5) to (s5);

\node (h6) at (-2.9,1) [draw, circle, inner sep=1.2pt] {};
\node (s6) at (-1.4,1) [fill, circle, inner sep=1.2pt] {};
\draw [thick] (h6) to (s6);

\node (h7) at (-1.4,2) [draw, circle, inner sep=1.2pt] {};
\node (s7) at (-0.5,2) [fill, circle, inner sep=1.2pt] {};
\draw [thick] (h7) to (s7);

\node (h8) at (-0.5,0) [draw, circle, inner sep=1.2pt] {};
\node (s8) at (0,0) [fill, circle, inner sep=1.2pt] {};
\draw [thick] (h8) to (s8);

\node (h9) at (0,1) [draw, circle, inner sep=1.2pt] {};
\node (s9) at (1.4,1) [fill, circle, inner sep=1.2pt] {};
\draw [thick] (h9) to (s9);

\node (h10) at (1.4,2) [draw, circle, inner sep=1.2pt] {};
\node (s10) at (2.3,2) [fill, circle, inner sep=1.2pt] {};
\draw [thick] (h10) to (s10);

\node (h11) at (2.3,3) [draw, circle, inner sep=1.2pt] {};
\node (s11) at (3,3) [fill, circle, inner sep=1.2pt] {};
\draw [thick] (h11) to (s11);

\node (h12) at (3,-1) [draw, circle, inner sep=1.2pt] {};
\node (s12) at (4.5,-1) [fill, circle, inner sep=1.2pt] {};
\draw [thick] (h12) to (s12);

\node (h13) at (4.5,-2) [draw, circle, inner sep=1.2pt] {};
\node (s13) at (5.5,-2) [fill, circle, inner sep=1.2pt] {};
\draw [thick] (h13) to (s13);

\node (h14) at (5.5,-1) [draw, circle, inner sep=1.2pt] {};
\node (s14) at (7,-1) [fill, circle, inner sep=1.2pt] {};
\draw [thick] (h14) to (s14);

\node (h15) at (7,0) [draw, circle, inner sep=1.2pt] {};
\node (s15) at (7.8,0) [fill, circle, inner sep=1.2pt] {};
\draw [thick] (h15) to (s15);

\node (h16) at (7.8,1) [draw, circle, inner sep=1.2pt] {};
\node (s16) at (8.8,1) [fill, circle, inner sep=1.2pt] {};
\draw [thick] (h16) to (s16);

\node (h17) at (8.8,-1) [draw, circle, inner sep=1.2pt] {};
\node (s17) at (9.7,-1) [fill, circle, inner sep=1.2pt] {};
\draw [thick] (h17) to (s17);

\node (h18) at (9.7,-3) [draw, circle, inner sep=1.2pt] {};
\node (s18) at (10.5,-3) [fill, circle, inner sep=1.2pt] {};
\draw [thick] (h18) to (s18);

\node (h19) at (10.5,-2) [draw, circle, inner sep=1.2pt] {};
\draw [thick] (h19) -- (11,-2);
 
\draw[gray, dashed, very thin] (-5,-3) -- (11,-3);
\draw[gray, dashed, very thin] (-5,-2) -- (11,-2); 
\draw[gray, dashed, very thin] (-5,-1) -- (11,-1);
\draw[gray, dashed, very thin] (-5,1) -- (11,1);
\draw[gray, dashed, very thin] (-5,2) -- (11,2);
\draw[gray, dashed, very thin] (-5,3) -- (11,3);

\draw[gray, very thin] (-4.7,0) -- (-4.7,-3.5) node[below] {$\sigma_{3}$};
\draw[gray, very thin] (-4.1,0) -- (-4.1,-3.5) node[below right] {$\sigma_{2}=\sigma_{1}$};
\draw[gray, very thin] (0,0) -- (0,-3.5) node[below] {$\tau_{0}$};
\draw[gray, very thin] (5.5,0) -- (5.5,-3.5) node[below] {$\tau_{1}=\tau_{2}$};
\draw[gray, very thin] (10.5,0) -- (10.5,-3.5) node[below] {$\tau_{3}$};

\draw[decorate,decoration={brace,mirror}]
(4, 2) -- (4, 3) node [black,midway,xshift=0.8cm] {$\frac{1}{m_{i}}$};

\end{tikzpicture}}\end{center}
\caption{A possible profile of local time differences over the excursion $[\sigma_{3},\tau_{3}-1]$.
Upward jumps  are of size \smash{$1/m_{i}$}, downward jumps are a positive integer multiple of \smash{$1/m_{i}$}, 
the actual value depending on the colour of the ball at the location of the jump.}
\label{f2}
\end{figure}
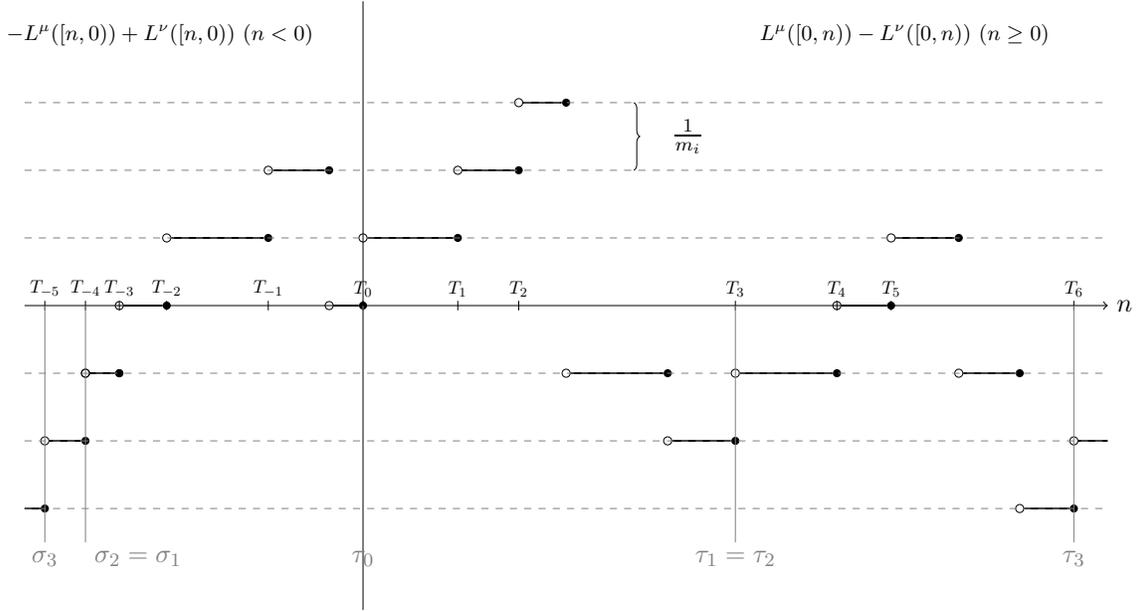

\begin{proof}[Proof of Theorem~\ref{T:Optimality}]
We now look at the sequence
$$\tau_n=\min\big\{T_k\geq 0 \colon L^\mu([0,T_k))-L^\nu([0,T_k))\leq \sfrac{-n}{m_i}\big\}.$$
Let $d_n= L^\mu([0,\tau_n))-L^\nu([0,\tau_n))$ and define
$$\sigma_n=\max\big\{k\leq 0 \colon -L^\mu([k,0))+L^\nu([k,0)) =  d_n\big\}.$$
$(\tau_n)$ and $(\sigma_n)$ are well-defined subsequences of $(T_n\colon n\in\Z)$,
$\prob_i$-almost surely, by Lemma~\ref{prob}. Moreover, $\tau_n\uparrow \infty$, $\sigma_n\downarrow-\infty$ and
by construction $[\sigma_n,\tau_n-1]$ is an excursion, see Figure~\ref{f2}. By Lemma~\ref{opti}
$$
\sum_{k=\sigma_n}^{\tau_n-1}
\Big\{  \psi\big(\tau_*(k)-k\big) + \sum_{\ell\in \tau_*^{-1}(k)} \psi\big(\tau_*(\ell)-\ell\big)\Big\}
\leq \sum_{ \heap{ \sigma_{n} \le k \le \tau_{n}-1}{\ell \in \Z} } 
\theta(k,\ell)\psi(k-\ell) + \sum_{\heap{ \sigma_{n} \le \ell \le \tau_{n}-1}{k \in \Z} } \theta(k,\ell)\psi(k-\ell)  .
$$
Lemma~\ref{ergo} shows that the left hand side is asymptotically equivalent to $2m_i\,L^i([\sigma_n, \tau_n])\, \E_i \psi(T_*)$ and the right hand side to
$2m_i\,L^i([\sigma_n, \tau_n])\,  \E_i^{_{^\oplus}} \psi(T)$, from which we conclude that $\E_i \psi(T_*)\leq\E_i^{_{^\oplus}} \psi(T)$.
\end{proof}

\pagebreak[3]

\section{Concluding remarks and open problems}\label{S:Conclusion}

\textbf{Non-Markovian setting.} Theorem~\ref{T:characterization_thm_intro} and Theorem~\ref{T:Existence_allocation_rule_intro} 
remain valid in a more general non-Markovian setting. We require that under the
$\sigma$-finite measure $\prob$ the stochastic process~$X$, taking values in the countable
state space $\skris$, is stationary with a strictly positive stationary $\sigma$-finite measure 
$(m_{i}: i \in \s)$.
The probability measure $\p_i$ is then defined by conditioning $X$ on the event $\{X_0=i\}$.
We further require that, for every $i,j\in\s$, the random sets
\smash{$\{n\in\N \colon X_n=j\}$} and \smash{$\{n\in\N \colon X_{-n}=j\}$}  are  
infinite $\p_i$-almost surely. Then both theorems
carry over to this conditioned process. Further technical conditions are required to
generalize Lemma~\ref{ergo} and hence extend Theorem 4 to the non-Markovian setting.
Theorem~3 however fully exploits the Markov structure and cannot be generalized easily.

\textbf{General inital distribution.} Although our main focus is on the case where the initial distribution is the Dirac measure
$\delta_{i}$ for some $i \in \s$, the statements of Proposition~\ref{P:transport_rule} and~\ref{P:allocation_rule} allow general initial distributions~$\mu$.
By conditioning on the initial state one can see that a \emph{sufficient} condition for existence
of the solution is that the target measure $\nu$ admits a decomposition \smash{$\nu=\sum_{i\in\s} \nu^{\ssup i} \mu_i$}, where $\nu^{\ssup i}$ are probability
measures on~$\s$, such that $m_i \nu_j^{\ssup i}/m_j$ are integers for
all $i,j\in\skris$. We do not believe that this is also a \emph{necessary} condition.

\textbf{Randomized shifts.} If the target measure $\nu$ fails to satisfy the integer condition in Theorem~\ref{T:characterization_thm_intro}\,(b),
extra randomization is needed to solve the embedding problem. With extra randomness any target measure~$\nu$ may be embedded in a way similar to the
extra head schemes in~\cite{HP05}: Take a random variable $U \sim \text{Uniform}(0,1)$ and define
\begin{equation}
T_{\texttt{rand}}:=\min\Big\{n\ge 0 \colon L^{i}([0,n]) - \sum_{j \in \s}\nu_j\,L^{j}([0,n]) \le \sfrac{U}{m_i} \Big\}.
\end{equation}
Then $T_{\texttt{rand}}$ is an unbiased shift embedding~$\nu$. We see that if the integer condition holds, the sample value of $U$ becomes irrelevant and
we recover the non-randomized solution $T_*$ defined in Theorem~\ref{T:Existence_allocation_rule_intro}.
\pagebreak[3]

\textbf{Brownian motion and optimal shifts.} Last et al.~\cite{LMT14} discuss the Skorokhod embedding problem for a two-sided Brownian motion
$(B_t)_{t\in\R}$. In this context a random time~$T$ solves the embedding problem if \smash{$(B_{T+t}-B_T)_{t\in\R}$} is a standard two-sided Brownian motion
independent of $B_{T}$ and the law of $B_{T}$ is~$\nu$. They show that for \emph{any} target distribution $\nu$ not charging the origin the stopping
time  $T_*=\inf\{t>0 \colon L^0_t=L^\nu_t\}$, where $(L^x_t \colon t>0)$ is the process of local times at level~$x$ and \smash{$L^\nu_t:=\int L^x_t \, \nu(dx)$},
solves the embedding problem. They further show that every solution~$T$ that is a stopping time satisfies \smash{$\E [T^{_{\frac14}}]=\infty$} while under a mild
condition on~$\nu$ the constructed solution
\smash{$T_*$} satisfies \smash{$\E [T_*^{_{\beta}}]<\infty$} for all $\beta<\frac14$. The techniques of the present paper can be adapted to improve the results of~\cite{LMT14}
by showing that \smash{$\E [T^{_{\frac14}}]=\infty$} even for non-negative solutions which are not necessarily stopping times, and also to show a strong optimality
result similar to Theorem~\ref{T:Optimality}, i.e.~that $\E_0 \psi(T_*) \leq \E_0 \psi(T)$ simultaneously for all non-negative concave functions~$\psi$. These results will appear in the
forthcoming thesis~\cite{Re15}.
\smallskip

\textbf{Signed shifts.} The optimality result of Theorem~\ref{T:Optimality} cannot be extended easily to random times~$T$ that can take both
positive and negative values. Indeed, starting from such a solution~$T$ and associating an allocation rule~$\tau$ to it, we may still make local
improvements by repairing crossings, but now there is more than one way to repair a crossing and the optimal way to do this appears to involve nonlocal
choices. To get a feeling for the difficulties, we look at a two-sided stable matching strategy that at a first glance looks like a good candidate for
an optimal solution.  In the language of Section~\ref{S:Optimality} we match a coloured ball to a white ball if \emph{both} the coloured ball is the
nearest coloured ball to the white ball, and the white ball is the nearest  white ball to the coloured ball (resolving possible ties in some deterministic way).
Locally, the resulting allocation rule may be better or worse than the one coming from our one-sided stable matching. Consider, for example, configuration
of balls in the order white--coloured--white--coloured  placed at distances $a, b, c$ such that $b<a, c$.  The two-sided algorithm matches the middle balls
and, if other balls are sufficiently far away, the outer balls, which gives a contribution of $\psi(b)+\psi(a+b+c)$. One-sided stable matching matches the
first pair and the second pair and gives $\psi(a)+\psi(c)$, and each contribution could be smaller or larger depending on the relative size of $a,b,c$.
Even finding the optimal moment properties of signed shifts is an open problem.
\smallskip

\textbf{Random fields.} A vast open area of possible further research are embedding problems for multiparameter processes and random fields. In higher dimensions stable allocation procedures no longer have optimal moment properties, see for example~Holroyd, Peres and Schramm~\cite{HPPS09}, so other methods  need to be considered. It would be particularly interesting to investigate embedding problems for spin systems such as the infinite volume Gibbs measure of the Ising model at high temperature. 
\smallskip

{\bf Acknowledgments:} We would like to thank Vitali Wachtel for valuable discussions on Wiener-Hopf decompositions and for suggesting the proof of Lemma~\ref{hitti}, which allowed
us to remove a regularity condition on the asymptotic Green's function of the Markov chain. 
The first author would like to thank Mathias Beiglb\"ock and Martin Huesmann
for enlightening discussions during visits to the Hausdorff Institute and Eurandom. We would particularly like to thank Martin Huesmann for conjecturing the  result of
Theorem~\ref{T:Optimality}. Last but not least, we would like to thank G\"unter Last and an anonymous referee for several insightful comments.



\end{document}